\newcommand{\R}{\mathbb{R}}
\newcommand{\N}{\mathbb{N}}
\newcommand{\C}{\mathbb{C}}
\newcommand{\E}{\mathbb{E}}
\newcommand{\mc}{\mathcal}
\newcommand{\eps}{\varepsilon}
\newcommand{\ind}{{\bf 1}}
\renewcommand{\P}{\mathbb{P}}
\renewcommand{\H}{\mathbb{H}}
\DeclareMathOperator{\diam}{diam}
\DeclareMathOperator{\dist}{dist}
\DeclareMathOperator{\SLE}{SLE}
\DeclareMathOperator{\Beta}{Beta}
\title{Duality of Schramm-Loewner Evolutions}
\author{Julien Dub\'edat\footnote{Partially supported by NSF grant DMS0804314}}
\newtheorem{thm}{Theorem}
\newtheorem{Thm}[thm]{Theorem}
\newtheorem{Prop}[thm]{Proposition}
\newtheorem{Lem}[thm]{Lemma}
\newtheorem{Cor}[thm]{Corollary}
\newtheorem{Rem}[thm]{Remark}
\begin{document}
\maketitle
\begin{abstract}
In this note, we prove a version of the conjectured duality for Schramm-Loewner Evolutions, by establishing exact identities in distribution between some boundary arcs of chordal $\SLE_\kappa$, $\kappa>4$, and appropriate versions of $\SLE_{\hat\kappa}$, $\hat\kappa=16/\kappa$.
\end{abstract}

\section{Introduction}

Schramm-Loewner Evolutions (or $\SLE$), introduced by Schramm in 1999, are probability distributions, parameterized by $\kappa>0$ on non-self traversing curves (the trace) connecting two boundary points in a planar, simply connected domain. They are characterized by a conformal invariance condition and a domain Markov property. See \cite{Lawler,W1} for general SLE background.

The geometric properties of the trace vary with the parameter $\kappa$. In particular, when $\kappa\leq 4$, the trace is a.s. a simple curve; this is no longer the case if $\kappa>4$ (\cite{RS01}). The trace stopped at some finite time is then distinct from its boundary. The duality conjecture for $\SLE$, roughly stated, is that a boundary arc of $\SLE_\kappa$ is locally absolutely continuous w.r.t. to (some version)
of $\SLE_{\hat\kappa}$, $\hat\kappa=16/\kappa$. This was suggested by Duplantier. In the case $(\kappa,\hat\kappa)=(8,2)$, this follows from the exact combinatorial relation between Loop-Erased Random Walks and Uniform Spanning Trees and the identification of their scaling limits in terms of $\SLE$  (\cite{LSW2}). In the case $(\kappa,\hat\kappa)=(6,8/3)$, it follows from the locality/restriction framework (\cite{LSW3}). An approach based on a relation with the free field has been proposed by Sheffield.

A precise duality conjecture is stated in \cite{Dub4} and elaborated on in \cite{Dub6}; we prove slightly different versions here. These involves variants of $\SLE_\kappa$: the $\SLE_\kappa(\underline\rho)$ processes ($\underline\rho=\rho_1,\dots,\rho_n$). They satisfy a domain Markov property when keeping track of $n$ marked points $z_1,\dots,z_n$ (in addition of the origin and the target of chordal $\SLE$). The influence of $z_i$ on the $\SLE$ trace is quantified by the real parameter $\rho_i$; this influence is attractive for $\rho_i>0$ and repulsive for $\rho_i<0$.

Let us consider a chordal $\SLE$ in the upper half-plane $\H$, going from 0 to infinity. In the phase $4<\kappa<8$, a boundary point, say $1$, is ``swallowed", ie gets disconnected from infinity by the trace at a random time $\tau_1$ when the trace hits some point in $(1,\infty)$. The boundary arc straddling 1 is the boundary arc seen by 1 at time $\tau_1^-$.

\begin{Thm}\label{dual4}
Consider a chordal $\SLE_\kappa$ in $(\H,0,\infty)$, $4<\kappa<8$;  
let $D$ be the leftmost visited point on $(1,\infty)$. Conditionally on $D$, the boundary arc straddling 1 is distributed as an $\SLE_{\hat\kappa}(-\frac{\hat\kappa}2,\hat\kappa-4,\hat\kappa-2)$ in $(\H,D,\infty,0,1,D^+)$, stopped when it hits $(0,1)$.
\end{Thm}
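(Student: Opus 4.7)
My plan is to realize the boundary arc as the trace of a second Loewner chain growing \emph{inward} from $D$, and to identify that chain with the claimed $\sle_{\hat\kappa}(\underline\rho)$ via a commutation/coupling argument in the style that has become standard for duality statements. First I would fix notation: write $\tau_1$ for the swallowing time of $1$, $D$ for the leftmost point of $(1,\infty)$ hit by $\gamma$, and $L=\sup(\gamma[0,\tau_1)\cap (0,1))$; then the arc straddling $1$ is the subarc of $\gamma$ from (the last visit of) $L$ to $D$, which together with $[L,D]\subset\R$ bounds the bubble pinched off at time $\tau_1$. I would orient this arc from $D$ to $L$ and seek to describe it as a Loewner chain started at $D$, targeted at $\infty$, stopped when it hits $(0,1)$.

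Next I would set up the coupling. Consider growing $\gamma$ from $0$ up to some stopping time $s<\tau_1$, and simultaneously growing a candidate dual curve $\eta$ from $D$ up to some stopping time $t$, with $\gamma[0,s]$ and $\eta[0,t]$ disjoint. Conditionally on $\eta[0,t]$, the target-independence / domain Markov property of chordal $\sle_\kappa$ restricted to the connected component of $\H\setminus\eta[0,t]$ containing $0$ on its boundary says that $\gamma$, viewed in that component, should be an $\sle_\kappa(\underline\rho\,')$ with force points on the boundary of the slit domain. Similarly, conditionally on $\gamma[0,s]$, $\eta$ should be an $\sle_{\hat\kappa}(\underline\rho)$ in the component containing $D$. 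Following the commutation framework (Dubédat, Sheffield--Miller), such a mutually consistent pair of conditional dynamics exists and is unique provided one exhibits a positive function $Z$ of the configuration that is a local martingale under both growths. The weights $(-\hat\kappa/2,\hat\kappa-4,\hat\kappa-2)$ at $(0,1,D^+)$ for $\eta$ are precisely those forced by the algebraic relation $\kappa\hat\kappa=16$ together with the identity $\rho_i(\rho_i+4-\kappa)/(4\kappa)=\tilde\rho_i(\tilde\rho_i+4-\hat\kappa)/(4\hat\kappa)$ satisfied between $\sle_\kappa(\rho)$ and $\sle_{\hat\kappa}(\tilde\rho)$ weights at corresponding force points; I would take $Z$ to be the partition function compatible with this weight assignment (a product of boundary Coulomb-gas factors between $\gamma_s$, $\eta_t$, $0,1,D,D^+,\infty$) and check the two null-vector PDEs by direct computation.

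The next step is to identify the curve $\eta$ produced by this coupling with the boundary arc straddling $1$. Given the coupling, $\eta$ is by construction a curve in $\H\setminus\gamma[0,\tau_1)$ that starts at $D$ and, by the choice of force-point weights, is repelled from the SLE hull so that it stays in the unbounded component and is absorbed on the real axis at $L$. On the other hand, the arc straddling $1$ has exactly these topological constraints. To promote this topological coincidence to equality in law, I would use the uniqueness of the commutation coupling (or equivalently the fact that the two curves have the same driving SDE as a time-change of one another), together with the fact that the SDE for $\eta$ degenerates correctly as $t\to\infty$ to produce a curve hitting $(0,1)$. Conditioning on $D$ at the end is harmless because $D$ is measurable with respect to $\eta$ (it is the starting point) and with respect to $\gamma$ (it is the swallowing point of $1$).

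The main obstacle is the rigorous passage from the coupling, which is naturally phrased in terms of local growth up to stopping times when the two curves are still far from each other, to the full statement at the swallowing time $\tau_1$, a limit point at which the $\sle_\kappa(\underline\rho')$ description of $\gamma$ given $\eta$ degenerates and the Loewner chain for $\eta$ is absorbed. Controlling this limit requires verifying that $\eta$ a.s.\ reaches $(0,1)$ in finite capacity time and that the boundary arc of $\gamma$, defined a priori only as a lim sup as $s\uparrow\tau_1$, is actually a continuous simple curve equal to $\eta$. This is where the detailed analysis of the Bessel-type SDE driving $\eta$ (with the specific weights $-\hat\kappa/2,\hat\kappa-4,\hat\kappa-2$) and of the hull boundary of $\sle_\kappa$ near a swallowing event will be carried out, leveraging the regularity results for $\sle_\kappa(\rho)$ processes already established in the literature.
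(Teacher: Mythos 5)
Your overall architecture --- set up a commuting pair consisting of the $\kappa$-curve and a candidate dual curve grown from $D$, extend the local coupling to a maximal one, then use a priori geometric facts to identify the dual curve with the boundary arc --- is the strategy of the paper. But as written the coupling is not well-posed, and two concrete devices that make it work are missing. You propose to couple the \emph{unconditioned} chordal $\SLE_\kappa$ with a curve $\eta$ seeded at $D$; since $D$ is a random point determined by the entire path $\gamma^{\tau_1}$, the commutation framework (which classifies pairs of SLE-type processes in a \emph{fixed} configuration with fixed marked points, Theorem \ref{class}) does not apply to this pair. The paper first disintegrates the chordal measure with respect to $D$: conditionally on $D=z$, the curve up to $\tau_1$ is the martingale transform of chordal $\SLE$ by $t\mapsto\partial_zF((g_t(z)-W_t)/(g_t(1)-W_t))$, identified via Lemma \ref{rhodens} as $\SLE_\kappa(\kappa-4,-4)$ in $(\H,0,\infty,1,z)$. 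Only after this step is $z$ a marked point of a fixed configuration, and it is the resulting $\rho$-weights of the first curve that force, through the classification, the weights $(-\frac{\hat\kappa}2,\hat\kappa-4,\hat\kappa-2)$ of the dual curve. Your closing remark that ``conditioning on $D$ at the end is harmless'' does not substitute for this: the conditioning must come first, because it changes the law of $\gamma$.

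Even after conditioning the configuration is degenerate: the seed of $\eta$ coincides with the point where $\gamma$ is conditioned to exit, and the marked point $D^+$ is at zero distance from that seed. The coupling theorem (Theorem \ref{maxcoupl}) and the exit-location lemma (Lemma \ref{rhohit}) are proved for configurations with distinct marked points and weights in specific ranges, so the paper splits $z$ into $y$ and $z_2$ (Table 2), checks that the regularized system commutes, that the first curve accumulates only at $y$ on $[1,\infty]$ while the second exits in $(0,1)$, runs the Proposition \ref{Pdual} argument to conclude that $\hat\gamma$ is the right boundary of $K$, and only then lets $z_2\searrow y$. Your outline correctly flags the local-to-maximal passage as an obstacle, but this marked-point regularization and limit is an independent, necessary step that you do not supply. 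Finally, the identification of $\eta$ with the boundary arc does not follow from ``uniqueness of the commutation coupling'' or from the two curves sharing a driving SDE up to time change (they do not --- one is the space-filling-side trace, the other its boundary); it follows from the fact that in the maximal coupling, conditionally on $\hat\gamma^{\hat\tau}$, the first curve must accumulate precisely at $\hat\gamma_{\hat\tau}$ and nowhere else on the relevant arc (Lemma \ref{rhohit}), combined with continuity of $\gamma$ and an ordering argument; this is the content of Proposition \ref{Pdual}, which your sketch replaces by an appeal to ``topological constraints.''
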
 

In the phase $\kappa\geq 8$, a.s. every point in $\overline\H$ is visited by the trace. We isolate a boundary arc in a different way. Let $G$ be the leftmost point on $(-\infty,0)$ visited by the trace before $\tau_1$. We consider the boundary of $K_{\tau_G}$, the hull of the $\SLE$ stopped when it first visits $G$; this boundary is an arc between $G$ and a point in $(0,1)$.

\begin{Thm}\label{dual8}
Consider a chordal $\SLE_\kappa$ in $(\H,0,\infty)$, $\kappa\geq8$. 
Let $G$ be the leftmost visited point $G$ on $(-\infty,0)$ before $\tau_1$. Conditionally on $G$, the boundary of $K_{\tau_G}$ is distributed as an $\SLE_{\hat\kappa}(\frac{\hat\kappa}2,\frac{\hat\kappa}2-2,-\frac{\hat\kappa}2,\hat\kappa-4)$ in $(\H,G,\infty,G^-,G^+,0,1)$, stopped when it hits $(0,1)$.
\end{Thm}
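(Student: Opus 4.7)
The strategy parallels Theorem~\ref{dual4}, adapted to the space-filling regime $\kappa\geq 8$. The central object is a pure partition function $\mathcal Z$, a scaling-covariant function of the positions of the marked points $G^-,G^+,0,1$ relative to the tip of the $\SLE_\kappa$, designed so that $\mathcal Z$ is a positive local martingale for the $\SLE_\kappa$ Loewner diffusion up to $\tau_G$. Tilting the measure by $\mathcal Z$ (Girsanov) realises, on a common probability space, the forward trace and a coupled $\SLE_{\hat\kappa}(\underline\rho)$ curve; the commutation framework of~\cite{Dub4} then guarantees that the two curves can be grown in either order while preserving their individual conditional laws.

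Concretely, write $W_t=\sqrt\kappa B_t$, let $g_t$ be the Loewner map, and track $V^i_t=g_t(z_i)-W_t$ for $z_i\in\{G^-,G^+,0,1\}$; then $\mathcal Z$ is a product of explicit powers of coordinate differences, dictated by the conformal weights attached to the marked points. The specific weight vector $(\frac{\hat\kappa}{2},\frac{\hat\kappa}{2}-2,-\frac{\hat\kappa}{2},\hat\kappa-4)$ claimed for the dual curve is exactly what the $\kappa\leftrightarrow\hat\kappa$ exchange on those conformal weights produces, which is the algebraic content of SLE duality at the level of partition functions. An It\^o computation under the tilted measure identifies the forward trace as an $\SLE_\kappa(\underline{\rho'})$ and, by commutation, the coupled dual curve as the claimed $\SLE_{\hat\kappa}(\underline\rho)$ started at $G$ with the listed force points. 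Running the $\SLE_\kappa$ up to $\tau_G$ then exposes $\partial K_{\tau_G}\cap\H$ as exactly the trace of the coupled dual curve, stopped when it hits $(0,1)$; disintegrating over the value of $G$ yields the stated conditional law.

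The main obstacle is regularity in the space-filling regime itself: one must show that $\partial K_{\tau_G}\cap\H$ is a continuous curve admitting a Loewner parameterization from $G$ to its exit point on $(0,1)$, that $G$ and $\tau_G$ are well-behaved measurable functionals on which one can regularly condition, and that $\mathcal Z$ is an honest martingale (not merely a local one) up to $\tau_G$, so that the Girsanov change of measure is valid without a localization step that would compromise the endpoint behaviour at $G$. The boundary case $\kappa=8$ is especially delicate because of the marginal regularity of the trace; the partition function and commutation computations go through formally there, but justifying the limits at $\tau_G$ and the disintegration over $G$ will require the sharpest available boundary-hitting estimates for the relevant $\SLE_\kappa(\underline\rho)$ processes.
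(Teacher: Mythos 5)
Your plan captures the right philosophy (partition functions, the $\kappa\leftrightarrow\hat\kappa$ exchange of weights, commutation), but it has two genuine gaps, and it locates the difficulty in the wrong place.

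First, local commutation by itself only yields a \emph{local} coupling: the two curves can be grown consistently while each stays in a small neighbourhood of its seed, disjoint from the other's. That is far too weak to identify a whole boundary arc. The heart of the matter is extending this to a \emph{maximal} coupling, correct on the whole time set $\{(s,t):K_s\cap\hat K_t=\varnothing\}$; this is the content of Theorem \ref{maxcoupl}, proved by building couplings $\Theta^\eta$ with densities $\prod_{(i,j)\in G}\ell_{i,j}^{i+1,j+1}$ relative to the independent coupling, using the tower property \eqref{towergen}, the uniform bound of Lemma \ref{Lbound}, and a tightness/limit argument. Your sentence ``the commutation framework then guarantees that the two curves can be grown in either order while preserving their individual conditional laws'' silently assumes exactly this global statement. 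Second, even granting the maximal coupling, you assert rather than prove that the dual trace \emph{is} $\partial K_{\tau_G}\cap\H$. This requires the geometric argument of Proposition \ref{Pdual}: one shows via countably many stopping times that $\hat\gamma$ is contained in the range of $\gamma$ with a preserved ordering, and then uses a priori information on where each $\SLE(\underline\rho)$ first exits (Lemma \ref{rhohit}, which constrains the accumulation points at the disconnection time through the partial sums $\overline\rho_k$ versus $\frac\kappa2-2$ and $\frac\kappa2-4$) to conclude that $\hat\gamma$ is precisely the boundary arc. Without these exit-point estimates the weight vector in the statement is unmotivated: it is chosen so that the first $\SLE$ exits at $y$ and the second exits on $(x,z_3)=(0,1)$.

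Two further points. The theorem's configuration is degenerate ($G^-$, $G$, $G^+$ coincide in the limit), so one must first set up a nondegenerate commuting system with $G$ split into three distinct points $z_1,y,z_2$ (Table 3), prove the coupling and boundary identification there, and only then let $z_1\nearrow y$, $z_2\searrow y$; your proposal works directly in the degenerate configuration. Also, the disintegration over $G$ is not a formality to be appended at the end: one computes the law of $G$ explicitly and identifies chordal $\SLE_\kappa$ conditioned on $G$ as an $\SLE_\kappa(-4,\kappa-4)$, which is what ties the commuting system to the original chordal curve. Finally, the worries you flag as the ``main obstacle'' (continuity of $\partial K_{\tau_G}$, honesty of the martingale $\mathcal Z$) largely evaporate in the paper's approach: the boundary arc is \emph{produced} as the trace of the dual $\SLE_{\hat\kappa}(\underline\rho)$, so its regularity comes for free, and all changes of measure are performed on stopped, uniformly bounded densities (Lemma \ref{Lbound}), so no global martingale property is needed.
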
 

The distributions of $D$ and $G$ are well known and easy to derive.

In \cite{Dub6}, it is shown that duality shares common features with reversibility and the question of defining multiple $\SLE$ strands in a common domain. This {\em local commutation} property states that two $\SLE$ strands can be grown in a domain to a positive size, in a way that does not depend on the order in which the $\SLE$'s are growing. Such systems of commuting $\SLE$'s are classified in \cite{Dub6}; in particular, two versions of $\SLE_\kappa$, $\SLE_{\hat\kappa}$ can commute only if $\hat\kappa\in\{\kappa,16/\kappa\}$.

In \cite{DZrevers}, Zhan proves reversibility of chordal $\SLE_\kappa$, $\kappa\leq 4$, i.e. that the range of the trace of an $\SLE_\kappa$ in $D$ going from $x$ to $y$ has the same distribution as the range of the trace  of $\SLE$ going from $y$ to $x$ in $D$. This was previously known for $\kappa\in\{2,8/3,4,6,8\}$. The argument involves a sequence of coupling of an $\SLE_\kappa(D,x,y)$ with an $\SLE_\kappa(D,y,x)$, such that each coupling in the sequence is absolutely continuous w.r.t. the trivial (independent) coupling, and the limiting coupling is exact (the ranges of the two traces are identical).

Let $\gamma$, $\hat\gamma$ be traces of two $\SLE$'s satisfying the local commutation condition. Then, for $U,V$ disjoint open subsets of the domain, one has a coupling of $(\gamma,\hat\gamma)$ which is ``correct" on the time set $\{(s,t): s\leq\tau, t\leq\hat\tau\}$, where $\tau$, $\hat\tau$ are stopping times for the two $\SLE$'s, such that $\gamma^\tau\subset U$, $\hat\gamma^{\hat\tau}\subset V$. We construct a coupling of $(\gamma,\hat\gamma)$, which is ``correct" on the time set $\{(s,t): \gamma_{[0,s]}\cap\hat\gamma_{[0,t]}=\varnothing\}$. See Theorem \ref{maxcoupl} for a precise statement.

The duality identities follow from applying Theorem \ref{maxcoupl} to appropriate pairs of commuting $\SLE$'s, together with some {\em a priori} geometric information on the traces. Plainly, many identities may be generated in this fashion.

The article is organized as follows. Section 2 recalls some absolute continuity properties of chordal $\SLE$. Local commutation is discussed in Section 3. Maximal couplings of commuting $\SLE$'s are constructed in Section 4. Geometric consequences (in particular duality) are drawn in Section 5. Some technical lemmas are postponed to Section 6. 

\noindent{\bf Acknowledgments.} I wish to thank Greg Lawler for comments on an earlier version of this article and Oded Schramm for useful conversations.

\section{Absolute continuity for chordal SLE}

In this section we consider some absolute continuity properties of chordal $\SLE$, mostly based on \cite{LSW3}.  
Chordal $\SLE$ will also serve as a reference measure for variants we will study later.

We adopt the following notation: $c=(D,x,y)$ is a configuration where $D$ is a simply connected domain and $x,y$ are distinct boundary points. Unless there is an ambiguity, the configuration is simply denoted by $D$. The chordal $\SLE_\kappa$ measure on $c=(D,x,y)$ is denoted by $\mu_c$ ($\kappa$ is fixed). It is seen as a measure on Loewner chains up to increasing time change; or as a configuration-valued continuous process (up to time change); or as a measure on non self-traversing paths (\cite{RS01}). This path (the SLE ``trace") is denoted by $\gamma$, while the hull it generates is denoted by $K$. Let $U$ be a subdomain of $D$, agreeing with $D$ in a neighbourhood of $x$, and not containing $y$ on its boundary. Then $\mu_D^{U}$  denotes the measure on paths induced by chordal $\SLE$ starting from $x$ and stopped on exiting $U$; this happens at a random time $\tau$, at which the hull is $K_\tau$, the tip of the trace is $\gamma_\tau$, and the configuration $c_\tau$ is $(D_\tau=D\setminus K_\tau,\gamma_\tau,y)$. More generally, for $\tau$ a stopping time, $\gamma^\tau$ denotes the trace stopped at $\tau$ (ie the process up to time $\tau$), $\mu_c^\tau$ the measure induced by stopping at $\tau$. We will use $\gamma$ to denote both the trace as a process and as a subset of $\overline D$ (the range of the process).

Later on, we will use tightness conditions, so we shall review some technical points now. Let $(D,x,y)$ be a (bounded) configuration, $K$ a hull such that $(D\setminus K,x',y)$ is a configuration for some $x'\in\partial K$. By the Riemann mapping theorem, there is a conformal equivalence $\phi_K:D\setminus K\rightarrow D$; one can specify it uniquely by requiring its 2-jet at $y$ to be trivial ($\phi_K(y)=y$, $\phi_K'(y)=1$, $\phi_K''(y)=0$ if $\phi_K$ extends smoothly at $y$; this condition is coordinate independent, so one can first ``straighten" the boundary at $y$). One defines a topology on hulls as follows: $(K_n)$ converges to $K$ if $\phi_{K_n}$ converges to $\phi_{K}$ uniformly on compact sets of $\overline D$ that are at positive distance of $K$. This is a version of Carath\'eodory convergence. A topology on chains $(K_t)_{t\geq 0}$ is given by the condition: $(K_t^n)_t$ converges to $(K_t)_t$ if for all $T>0$, $K'$ a compact set of $\overline D$ at positive distance of $K_T$, $\phi_{K^n_t}$ converges to $\phi_{K_t}$ uniformly on $[0,T]\times K'$. Then the Loewner equation maps continuously $C(\R^+,\R)$ (with the usual topology of uniform convergence on compact sets) to the space of chains endowed with this topology (\cite{Lawler} Section 4.7). Thus the induced measure on chains is a Radon measure. From \cite{RS01}, we know that the chain is a.s. generated by a continuous non self-traversing path $\gamma$. For clarity, we will think of $\SLE$ as a measure on such paths, with the topology on chains described above.

To express densities, we need to define some conformal invariants. Let $(D,x,y)$ be a configuration, $z_x,z_y$ analytic local coordinates at the boundary ($z_x$ mapping a neighbourhood of $x$ in $D$ to the neighbourhood of $0$ in the upper semidisk). The Poisson excursion kernel is defined as
$$H_D(x,y)=\lim_{X\rightarrow x,Y\rightarrow y}\frac{G_D(X,Y)}{\Im(z_x(X))\Im(z_y(Y))}$$ 
where $G_D$ is the Green function in $D$ (with Dirichlet boundary conditions); this depends on the choice of $z_x$ (or $z_y$) as a (real) 1-form. If $D$ and $D'$ agree in a neighbourhood of $x$, we choose the same local coordinate $z_x$, so that $H_{D'}(x,y')/H_D(x,y)$ does not depend on a choice of local coordinate at $x$. Similarly for $i,j=1,2$, consider configurations $(D_{ij},x_i,y_j)$ such that $D_{ij}$ agrees with $D_{i,3-j}$ in a neighbourhood of $x_i$ and with $D_{3-i,j}$ in a neighbourhood of $y_j$. Then the ratio:
$$\frac{H_{D_{11}}(x_1,y_1)H_{D_{22}}(x_2,y_2)}{H_{D_{12}}(x_1,y_2)H_{D_{21}}(x_2,y_1)}$$
is defined independently of any (coherent) choice of local coordinates. To simplify the notation, if $c=(D,x,y)$ is a configuration, we set $H(c)=H_D(x,y)$.

There is a $\sigma$-finite measure $\mu^{loop}$ on unrooted loops in $\C$, the Brownian loop measure (\cite{LSW3,LW}). As in \cite{KLconf}, let us denote
$$m(D;K,K')=\mu^{loop}\{\delta: \delta\subset D, \delta\cap K\neq\varnothing, \delta\cap K'\neq\varnothing\}.$$
In accordance with \cite{LSW3}, set $\alpha=\alpha_\kappa=\frac{6-\kappa}{2\kappa}$, $\lambda=\lambda_\kappa=\frac{(6-\kappa)(8-3\kappa)}{2\kappa}$.

\begin{Prop}\label{Pdens}
If $c=(D,x,y)$ and $c'=(D',x,y')$ are configurations agreeing in a neighbourhood $U$ of $x$, $\partial U\cap\partial D=\partial U\cap\partial D'$ a connected arc containing $x$ at positive distance of $y,y'$, then $\mu_{c}^U$ and $\mu_{c'}^U$ are mutually absolutely continuous, with density
$$\frac{d\mu_{c'}^U}{d\mu_{c}^U}(\gamma)=\left(\frac{H(c'_\tau)H(c)}{H(c_\tau)H(c')}\right)^\alpha\exp(-\lambda m(D;K_\tau,D\setminus D')+\lambda m(D';K_\tau,D'\setminus D)) $$
uniformly bounded above and below.
\end{Prop}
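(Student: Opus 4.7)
This is a restriction-type absolute continuity statement in the spirit of \cite{LSW3}; the strategy is to exhibit the alleged density as a bounded martingale under the reference measure $\mu_c^U$ and then apply Girsanov's theorem. First I reduce to common coordinates: by the Riemann mapping theorem fix a conformal equivalence sending $(D,x)$ to $(\H,0)$; since $D$ and $D'$ agree on $U$, the same map sends $(D',x)$ to $(\H,0)$ locally. Run chordal $\SLE_\kappa$ in $D$ with the usual Loewner driving $W_t=\sqrt\kappa B_t$, hulls $K_t$ and maps $g_t:D\setminus K_t\to D$. All the quantities in the alleged density become explicit functions of $(W_t,g_t)$: $H(c_t)$ is, up to conformal factors, a ratio of hyperbolic derivatives of $g_t$ at the driving point $W_t$ and at the image of $y$, and the same in $D'$ for $H(c'_t)$; the Brownian loop intensities $m(D;K_t,\cdot)$ and $m(D';K_t,\cdot)$ admit explicit Schwarzian-type time derivatives (\cite{LSW3,LW}).

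The heart of the argument is to show that
$$M_t\;=\;\left(\frac{H(c'_t)H(c)}{H(c_t)H(c')}\right)^{\!\alpha}\exp\bigl(-\lambda\, m(D;K_t,D\setminus D')+\lambda\, m(D';K_t,D'\setminus D)\bigr)$$
is a local martingale under $\mu_c$. Itô's formula produces a martingale part plus a drift, the latter receiving three contributions: one from $(H(c'_t)/H(c_t))^\alpha$, and one from each Brownian loop term. A direct conformal computation (Hadamard's variational formula relating the $t$-derivative of $m(D;K_t,D\setminus D')$ to the Schwarzian of $g_t$, and the conformal covariance of $H$ as a boundary $1$-form at each marked point) shows that the specific values $\alpha=(6-\kappa)/(2\kappa)$ and $\lambda=(6-\kappa)(8-3\kappa)/(2\kappa)$ are precisely those for which these three drift contributions cancel. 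This is the restriction/locality identity of \cite{LSW3}.

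Finally, since $\tau$ is the first exit of $U$, and $U$ is at positive distance from $y$ and $y'$, for $t\leq\tau$ the Poisson kernels $H(c_t),H(c'_t)$ stay bounded above and away from zero, and only loops meeting the fixed compact $\overline U\cap\partial D$-region can contribute to either $m$-term, which are therefore uniformly bounded. Thus $M_{t\wedge\tau}$ is a bounded positive martingale with $M_0=1$. By Girsanov, under $M_\tau\cdot\mu_c^U$ the process $W_t$ acquires exactly the drift corresponding to uniformizing the Loewner chain of $\SLE_\kappa$ in $D'$ (rather than in $D$); by uniqueness of chordal $\SLE$ this tilted measure equals $\mu_{c'}^U$, yielding the stated density. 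The main obstacle is the drift cancellation in the second step: once the correct conformal weights of $H$ and of $m$ are identified, the cancellation is algebraic, but bookkeeping the $1$-form covariance at both marked points $x$ and $y$ and matching it against the loop derivative in the direction of the domain perturbation is where all the subtleties concentrate.
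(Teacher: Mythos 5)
Your route is genuinely different from the paper's. The paper does not verify the density by an It\^o computation at all: it quotes two already-established special cases --- (i) $D=D'$ with a change of target point $y\to y'$, where the density is a pure ratio of Poisson excursion kernels with no loop term (Lemma 3.2 of \cite{Dub6}, \cite{SchWil}), and (ii) $D'\subset D$ with $y=y'$, which is exactly the restriction formula of Proposition 5.3 in \cite{LSW3} --- and then obtains the general statement by composing four such elementary moves through auxiliary configurations $(D,x,y)\to(D,x,y'')\to(V,x,y'')\to(D',x,y'')\to(D',x,y')$, where $V$ is the relevant component of $D\cap D'$; the ``inclusion--exclusion'' structure of the kernel ratios and the restriction property of the loop measure make the intermediate terms telescope. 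Your plan instead proves both perturbations simultaneously by exhibiting $M_{t\wedge\tau}$ as a bounded martingale and applying Girsanov. That buys a self-contained argument with no auxiliary configurations, at the price of having to actually perform the combined drift computation.

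That computation is the one real gap in what you wrote. You assert that the three drift contributions cancel for the stated $\alpha,\lambda$ and call this ``the restriction/locality identity of \cite{LSW3}'', but the identity in \cite{LSW3} covers only the domain perturbation with \emph{fixed} endpoints; the change of target point $y\to y'$ contributes a separate drift through the ratio $H_{D'}(\gamma_t,y')/H_{D}(\gamma_t,y)$ evaluated at two different marked points in two different domains, and the cancellation of the full sum is not literally a cited result. You must either carry out that It\^o computation (tracking the $1$-form covariance of $H$ at the tip, the Hadamard/Schwarzian derivative of both loop terms, and the cross terms) or factor the perturbation as the paper does so that each step is covered by an existing lemma. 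Two smaller points to tighten: the identification of the tilted measure with $\mu_{c'}^U$ requires noting that the hulls $K_t\subset U$ live in both domains and that the measures are considered up to increasing time change (the Loewner parametrizations by $D$- and $D'$-capacity differ); and the uniform boundedness of the loop terms needs $K_\tau$ to stay at positive distance from $\overline{D\setminus D'}$, which is the separation hypothesis made quantitative in Lemma \ref{Lbound} rather than an automatic consequence of $K_\tau\subset\overline U$.
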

\begin{proof}
We will reduce the statement to two known cases.

1. Assume that $D=D'$. Then the statement follows from Lemma 3.2 in \cite{Dub6}; see also \cite{SchWil}. More precisely, consider the following situation: $\H$ is the upper half-plane, three boundary points  $x,y,y'$ are marked; $K$ is a hull around $x$, $x'$ its tip, $\phi$ a conformal equivalence $\H\setminus K\rightarrow \H$.
Then $H_\H(x,y)=(x-y)^{-2}$, $H_\H(x,y')=(x-y')^{-2}$, computing in the natural local coordinate. It follows that $H_{\H\setminus K}(x',y)=\frac{\phi'(y)}{(\phi(y)-\phi(x'))^2}$, $H_{\H\setminus K}(x',y')=\frac{\phi'(y')}{(\phi(y')-\phi(x'))^2}$, for an appropriate (common) local coordinate at $x'$. Then the ratio
$$\frac{H_{\H\setminus K}(x',y')H_\H(x,y)}{H_{\H\setminus K}(x',y)H_\H(x,y')}=\phi'(y')\left(\frac{y'-x'}{\phi(y')-\phi(x')}\right)^2\phi'(y)^{-1}\left(\frac{y'-x'}{\phi(y')-\phi(x')}\right)^{-2}$$
is independent of (coherent) choices. One concludes by identifying the density of an $\SLE_\kappa(\H,x,y)$ and an $\SLE_\kappa(\H,x,y')$ with respect to the common reference measure $\SLE_\kappa(\H,x,\infty)$.
\\ 
2. Assume that $D'\subset D$, $D'$ and $D$ agree in a neighbourhood of $y=y'$. Then the statement is a rephrasing of Proposition 5.3 in \cite{LSW3}.\\
3. The general case reduces to 1,2 as follows. Let $y''$ be a point on the connected boundary arc of $x$ in $\partial D\cap\partial D'$, which is not on $\overline{\partial U}$; and $V$ the connected component of $D\cap D'$ having $x$ on its boundary. Then apply 1 to go from $(D,x,y)$ to $(D,x,y'')$; then 2 to go from $(D,x,y'')$ to $(V,x,y'')$; then 1 to go from $(V,x,y'')$ to $(D',x,y'')$; then 2 to go from $(D',x,y'')$ to $(D',x,y')$. Cancellations occur due to the ``inclusion exclusion" form of the ratios $(H(c'_\tau)H(c)/H(c_\tau)H(c'))$. 

For a general bound on densities, see Lemma \ref{Lbound}.
\end{proof}

\section{Local commutation}

\subsection{Reversibility}

Following the discussion in Section 2.1 of \cite{Dub6}, we phrase and then check a necessary condition for reversibility.

Consider a configuration $c=c_{0,0}=(D,x,y)$, $\gamma$ an $\SLE$ from $x$ to $y$ and $\hat\gamma$ an $\SLE$ from $y$ to $x$. Denote $c_{s,t}=(D\setminus (K_s\cup\hat K_t),\gamma_s,\hat\gamma_t)$. Let $U,\hat U$ be disjoint neighbourhoods of $x,y$ respectively; $\tau,\hat\tau$ denote first exits of $U,\hat U$ by $\gamma,\hat\gamma$ respectively.
Assume that $\gamma,\hat\gamma$ can be coupled so that one is the reversal of the other. Then an application of the Markov property for $\gamma,\hat\gamma$ shows that the distribution of $\gamma^\tau$ conditional on $\hat\gamma^{\hat\tau}$ is (stopped) $\SLE$ in $c_{0,\hat\tau}=(D\setminus\hat K_{\hat\tau},x,\hat\gamma_{\hat\tau})$. Symmetrically, the conditional distribution of $\hat\gamma^{\hat\tau}$ given $\gamma^\tau$ is $\SLE$ in $c_{\tau,0}=(D\setminus K_\tau,\gamma_\tau,y)$. By integration, this gives the identity of measures:
\begin{equation}\label{Comm}
\int \hat f(\hat\gamma^{\hat\tau})(\int f(\gamma^\tau) d\mu_{c_{0,\hat\tau}}^U(\gamma^\tau))d\hat\mu^{\hat U}_{c}(\hat\gamma^{\hat\tau})=
\int f(\gamma^\tau) (\int \hat f(\hat\gamma^{\hat\tau})d\hat\mu_{c_{\tau,0}}^{\hat U}(\hat\gamma^{\hat\tau}))d\mu^{U}_{c}(\gamma^{\tau})
\end{equation}
for arbitrary positive Borel functions $f,\hat f$. This is the {\em local commutation} condition studied in \cite{Dub6}. Disintegrating and inserting densities (that exist from absolute continuity properties) yields the condition:
\begin{equation}\label{Comm2}
\left(\frac{d\mu_{c_{0,\hat\tau}}^U}{d\mu_c^U}\right)(\gamma^\tau)=\left(\frac{d\hat\mu_{c_{\tau,0}}^{\hat U}}{d\hat\mu_c^{\hat U}}\right)(\hat\gamma^{\hat\tau})
\end{equation}
almost everywhere in $\gamma^\tau,\hat\gamma^{\hat\tau}$. This is an identity between two (continuous) functions of the paths $\gamma,\hat\gamma$. From the above results on absolute continuity of $\SLE$ (Proposition \ref{Pdens}), we see that both sides are indeed equal, and their common value is the explicit quantity:
\begin{equation}\label{RN}
\ell_D(\gamma^\tau,\hat\gamma^{\hat\tau})=\left(\frac{H(c_{\tau,\hat\tau})H(c)}{H(c_{\tau,0})H(c_{0,\hat\tau})}\right)^\alpha\exp(-\lambda m(D;K_\tau,\hat K_{\hat\tau}))
\end{equation}
which is manifestly symmetric in $\gamma^\tau,\hat\gamma^{\hat\tau}$. (We use $\ell$ for {\em likelihood ratio}, somewhat abusively).

Using the expression of $\ell$ as Radon-Nikodym derivatives of probability measures, we see that:
\begin{align}\label{Coupl}
\int \ell_D(\gamma^\tau,\hat\gamma^{\hat\tau})d\hat\mu_c^{\hat U}(\hat\gamma^{\hat\tau})=\int d\hat\mu_{c_{\tau,0}}(\hat\gamma^{\hat\tau})=1&&\forall \gamma^\tau\nonumber\\
\int \ell_D(\gamma^\tau,\hat\gamma^{\hat\tau})d\mu_c^{U}(\gamma^{\tau})=\int d\mu_{c_{0,\hat\tau}}(\gamma^{\tau})=1&&\forall \hat\gamma^{\hat\tau}
\end{align}

This also applies to any pair of stopping times $\sigma,\hat\sigma$ dominated by $\tau,\hat\tau$ (ie $\sigma$ is a stopping time for $\gamma$ such that $\sigma\leq\tau$ a.s.).

Such local commutation identities (without relying on explicit densities) are proved in greater generality in \cite{Dub6} under an infinitesimal commutation condition, which is easily checked in the present case.

Let us also point out a (deterministic) tower property, dictated by compatibility with the SLE Markov property. Let $(D,x,y)$ be a domain, $(K_s)$ a Loewner chain growing at $x$ (with trace $\gamma_s$), and $(\hat K_t)$ a Loewner chain growing at $x$ (with trace $\hat\gamma_t$). Let $c_{s,t}=(D\setminus (K_s\cup \hat K_t),\gamma_s,\hat\gamma_t)$. Let us denote, for $0\leq s_1\leq s_2$, $0\leq t_1\leq t_2$:
$$\ell_{s_1,t_1}^{s_2,t_2}=\ell_{c_{s_1,t_1}}(\gamma^{s_2},\hat\gamma^{t_2})$$
Then, for $0\leq s_1\leq s_2\leq s_3$, $0\leq t_1\leq t_2\leq t_3$:
\begin{equation}\label{tower}
\ell_{s_1,t_1}^{s_2,t_2}\ell_{s_2,t_1}^{s_3,t_2}=\ell_{s_1,t_1}^{s_3,t_2},{\textrm\ \ \ \ }\ell_{s_1,t_1}^{s_2,t_2}\ell_{s_1,t_2}^{s_2,t_3}=\ell_{s_1,t_1}^{s_2,t_3}.
\end{equation}
For fixed $\gamma$, the first relation has to hold a.e. in $\gamma$ to ensure compatibility of \eqref{Coupl} with the Markov property of $\gamma$; the second relation corresponds to the Markov property of $\hat\gamma$. Alternatively, this can be checked directly from the explicit expression \eqref{RN}, by telescopic cancellations and the restriction property of the loop measure $\mu^{loop}$ (\cite{LW}).

\subsection{The general case}

We now move to the general case (in simply connected domains) of local commutation, following Theorem 7.1 of \cite{Dub6}, which we rephrase in the present context.

A configuration consists of a simply connected domain $D$ with marked points: $c=(D,z_0,z_1,\dots,z_n,z_{n+1})$; the marked points are distinct and in some prescribed order on the boundary. The question is to classify pairs of $\SLE$ (with the $\SLE$ Markov property relatively to these configurations), one growing at $z_0$ (hulls $(K_s)$, trace $\gamma$), one at $z_{n+1}$ (hulls $(\hat K_t)$, trace $\hat\gamma$), that satisfy local commutation \eqref{Comm}, \eqref{Comm2}. As before, we denote $c_{s,t}=(D_{s,t}=D\setminus (K_s\cup\hat K_t),\gamma_s,z_1,\dots,z_n,\hat\gamma_t)$ when this is still a configuration (ie before swallowing of any marked point). 

We take as reference measures a chordal $\SLE_\kappa$ from $z_0$ to $z$, and a chordal $\SLE_{\hat\kappa}$ from $z_{n+1}$ to $z$, where $z$ is another marked boundary point, used solely for normalization. Then:

\begin{Thm}[\cite{Dub6}]\label{class}
Local commutation is satisfied iff: $\hat\kappa\in\{\kappa,16/\kappa\}$ and there exists a conformally invariant function $\psi$ on the configuration space and exponents $\nu_{ij}$ such that $\sum_{j=1}^{n+1}\nu_{0,j}=\alpha_\kappa$, $\sum_{i=0}^n\nu_{i,n+1}=\alpha_{\hat\kappa}$, and if
$$Z(c)=\psi(c)\prod_{0\leq i<j\leq n+1} H_D(z_i,z_j)^{\nu_{ij}}$$
and
\begin{align*}
M_s&=H_c(z_0,z)^{-\alpha_\kappa}Z(c_{s,0})\\
\hat M_t&=H_c(z_{n+1},z)^{-\alpha_{\hat\kappa}}Z(c_{0,t})
\end{align*}
then $(M_s)$ is a local martingale for $\SLE_\kappa(D,z_0,z)$, $(\hat M_t)$ is a local martingale for $\SLE_{\hat\kappa}(D,z_{n+1},z)$; these martingales are the densities of the commuting $\SLE$'s w.r.t. the reference chordal $\SLE$ measures, up to swallowing of a marked point. 
\end{Thm}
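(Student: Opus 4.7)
The plan is to derive the classification by reducing local commutation to a PDE compatibility problem, following the general strategy of Theorem 7.1 of \cite{Dub6}.

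First, I would rewrite the commutation identity \eqref{Comm2} using the chordal measures as reference. Let $\mu_c$ be the candidate commuting $\SLE$ at $z_0$ and $\nu_c$ the chordal $\SLE_\kappa(D,z_0,z)$; by the absolute continuity Proposition \ref{Pdens} (localized on $U$), we may write $d\mu_c^U/d\nu_c^U = M_\tau$ where $M_s$ is a positive local martingale under $\nu_c$, depending only on the marked configuration seen from the tip. Similarly for $\hat\mu_c$ with density $\hat M_t$ against $\hat\nu_c$. Substituting into \eqref{Comm2} and using that the chordal pair $(\nu, \hat\nu)$ already satisfies commutation with the explicit symmetric factor $\ell_D$ of \eqref{RN}, a short rearrangement shows that \eqref{Comm2} is equivalent to requiring that $M_s$, $\hat M_t$ both arise from a single function $Z$ on the configuration space via
\[
M_s = \frac{Z(c_{s,0})}{Z(c_{0,0})}\cdot \frac{H(c_{0,0})^{\alpha_\kappa}}{H(c_{s,0})^{\alpha_\kappa}},\qquad \hat M_t = \frac{Z(c_{0,t})}{Z(c_{0,0})}\cdot \frac{H(c_{0,0})^{\alpha_{\hat\kappa}}}{H(c_{0,t})^{\alpha_{\hat\kappa}}},
\]
with $H(c) = H_D(z_0,z)$ (resp.\ $H_D(z_{n+1},z)$) chosen appropriately.

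Next, I would impose the two martingale conditions. Applying It\^o's formula to $M_s$ under $\SLE_\kappa(D,z_0,z)$ — working in half-plane coordinates with driver $\sqrt\kappa B_s$ and flowing the marked points by the Loewner ODE — produces a second-order differential operator $\mc L_\kappa$ (of BPZ level-2 type) that must annihilate $Z$: schematically
\[
\mc L_\kappa Z = \Bigl(\tfrac\kappa2 \partial_{z_0}^2 + \sum_{j\geq 1}\tfrac{2}{z_j - z_0}\partial_{z_j} - \sum_{j\geq 1}\tfrac{2 h_j}{(z_j-z_0)^2}\Bigr) Z = 0,
\]
with $h_0 = \alpha_\kappa$ fixing the leading behaviour at $z_0$. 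Symmetrically, the martingale condition for $\hat M_t$ forces $\mc L_{\hat\kappa} Z = 0$ at $z_{n+1}$. The system is overdetermined; the compatibility $[\mc L_\kappa,\mc L_{\hat\kappa}]Z = 0$ is the classical commutator computation (central-charge matching $c_\kappa = c_{\hat\kappa}$), and yields exactly the quadratic relation $\hat\kappa\in\{\kappa, 16/\kappa\}$, together with the conformal weights $h_j$ pinned to those of level-2 degenerate fields at the other marked points.

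Finally, I would solve the compatible system. Each Poisson kernel $H_D(z_i, z_j)$ is itself a covariant quantity of weight $1$ at each of its two marked points, and is conformally invariant apart from these boundary weights; thus arbitrary monomials $\prod H_D(z_i,z_j)^{\nu_{ij}}$ can be used to absorb the correct conformal weights at $z_0$ and $z_{n+1}$, giving precisely the constraints $\sum_{j=1}^{n+1}\nu_{0,j} = \alpha_\kappa$ and $\sum_{i=0}^n\nu_{i,n+1} = \alpha_{\hat\kappa}$. Dividing $Z$ by such a monomial reduces the problem to a conformally invariant solution $\psi$ of the two BPZ equations; by conformal invariance, $\psi$ is a function of the cross-ratios among $z_0,\dots, z_{n+1}$ alone, and one checks that arbitrary such $\psi$ remain admissible because the action of $\mc L_\kappa, \mc L_{\hat\kappa}$ on $\psi$ vanishes automatically after the weight-matching. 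Conversely, given any $\psi$ and any $\nu_{ij}$ satisfying the two sum constraints, the Girsanov transforms produced by $M_s, \hat M_t$ define commuting $\SLE$'s — this direction is a direct computation from the martingale property and the reduction of the first paragraph.

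The main obstacle is the classification step in the last paragraph: one must show that every covariant solution of the coupled level-2 system factors as $\psi \prod H^{\nu_{ij}}$ with $\psi$ invariant. This is a structural statement about the solution space of the BPZ system (not merely existence), and it is precisely where the detailed argument of Theorem 7.1 in \cite{Dub6} does the heavy lifting; the present proof inherits that classification and only verifies that the resulting densities do satisfy \eqref{Comm2} up to swallowing of a marked point via the tower property \eqref{tower}.
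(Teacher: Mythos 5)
The paper does not actually prove Theorem \ref{class}: it is quoted verbatim (with attribution) from Theorem 7.1 of \cite{Dub6}, and the only thing the present article adds is the rephrasing in terms of the configurations $c_{s,t}$ and the reference chordal measures. So your proposal is attempting more than the paper does, and its overall architecture --- rewrite \eqref{Comm2} against the chordal reference via Proposition \ref{Pdens}, extract the densities $M_s$, $\hat M_t$ as local martingales, translate the martingale property into level-two BPZ operators $\mc L_\kappa$, $\mc L_{\hat\kappa}$ annihilating $Z$, and obtain $\hat\kappa\in\{\kappa,16/\kappa\}$ from a commutator/compatibility computation --- is indeed the strategy of \cite{Dub6}. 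Two remarks on that part: the correct infinitesimal condition is not $[\mc L_\kappa,\mc L_{\hat\kappa}]Z=0$ as a separate equation but that the commutator lies in the (left) module generated by $\mc L_\kappa,\mc L_{\hat\kappa}$, so that their common null space is stable; and, as you concede, the structural claim that every covariant solution factors as $\psi\prod H^{\nu_{ij}}$ (ruling out, e.g., dependence on higher jets at marked points) is exactly the heavy lifting of \cite{Dub6}, which you inherit rather than prove. Deferring it is acceptable here since the paper does the same.

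There is, however, one genuine error in your last paragraph: the assertion that after the weight-matching $\sum_j\nu_{0,j}=\alpha_\kappa$, $\sum_i\nu_{i,n+1}=\alpha_{\hat\kappa}$, ``arbitrary such $\psi$ remain admissible because the action of $\mc L_\kappa,\mc L_{\hat\kappa}$ on $\psi$ vanishes automatically.'' It does not. The prefactor $\prod H^{\nu_{ij}}$ only fixes the conformal weights (the second-order and most singular first-order terms at $z_0$ and $z_{n+1}$); the residual equations on the cross-ratio function $\psi$ are nontrivial second-order ODE/PDEs. The paper's own four-marked-point example at the end of Section 3.2 makes this explicit: $\psi(u)$ must be $(u(1-u))^\beta\,{}_2F_1(2\beta,2\beta+\frac8\kappa-1;2\beta+\frac4\kappa;u)$, a specific hypergeometric solution, not an arbitrary function of $u$. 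Correspondingly, your converse direction must be restated: commutation follows for those $\psi$ (and admissible $\nu_{ij}$) for which $M$ and $\hat M$ are genuinely local martingales --- that hypothesis is part of the theorem's characterization, not a consequence of the weight constraints --- and then the symmetry of the explicit density \eqref{RNgen} together with the tower property \eqref{towergen} yields \eqref{Comm2}.
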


In the theorem, notice that $M_s=H_c(z_0,z)^{-\alpha_\kappa}Z(c_{s,0})$ is defined via a choice of local coordinates at $z,z_1,\dots,z_{n+1}$ , but not at $z_0$, where the evolution occurs (the choice of local coordinates is arbitrary but fixed under evolution).

A good example of the situation is the following: $\hat\kappa=\kappa=6$, with four marked points $(z_0,z_1,z_2,z_3$), $\psi$ the probability that there is a percolation crossing from $(xy)$ to $(z_1z_2)$.

The absolute continuity properties of these $\SLE$'s reduce to that of chordal $\SLE$ as in Lemma \ref{RNrho}: let $Z(c)=\psi(c)\prod_{i<j}H_D(z_i,z_j)^{\nu_{ij}}$ (this depends on local coordinates at the $z_i$'s; appropriate ratios will not depend on coherent choices). Then we have
\begin{equation*}
\ell_D(\gamma^\tau,\hat\gamma^{\hat\tau})=\left(\frac{Z(c_{\tau,\hat\tau})Z(c_{0,0})}{Z(c_{\tau,0})Z(c_{0,\hat\tau})}\right)\exp(-\lambda m(D;K_\tau,\hat K_{\hat\tau}))
\end{equation*}
where, if $\mu_c$, $\hat\mu_c$ denote the two commuting $\SLE$ measures, $\tau$, $\hat\tau$ stopping times preceding swallowing of any marked point,
$$\ell_D(\gamma^\tau,\hat\gamma^{\hat\tau})=\left(\frac{d\mu_{c_{0,\hat\tau}}^\tau}{d\mu_c^\tau}\right)(\gamma^\tau)=\left(\frac{d\hat\mu_{c_{\tau,0}}^{\hat\tau}}{d\hat\mu_c^{\hat\tau}}\right)(\hat\gamma^{\hat\tau}).$$
Notice that when $\hat\kappa\in\{\kappa,16/\kappa\}$, $\lambda_\kappa=\lambda_{\hat\kappa}$. From compatibility with the $\SLE$ Markov property, or directly from the expression \eqref{RNgen}, we still have the tower property (see \eqref{tower}).

The definition of $Z$ involves $1$-jets of local coordinates at marked points; one could imagine more complicate dependences, say on $k$-jets at marked points; this is ruled out by the theorem. The situation in other (non simply connected) topologies is quite involved, though an important part of the analysis carries through (see \cite{Dub6}). For $n$ $\SLE$'s in a simply connected domain, the local commutation relation for the system of $n$ $\SLE$'s reduces to $\frac{n(n-1)}2$ pairwise commutation conditions.

An easy way to generate systems of commuting $\SLE$'s is to look for partition functions in the simple form $Z(c)=\prod_{i<j}H_D(z_i,z_j)^{\nu_{ij}}$. For an admissible choice of exponents $\nu_{ij}$, this corresponds to an $\SLE_\kappa(\underline\rho,\rho)$ in $(\H,z_0,\dots,z_{n+1})$ with $\rho_i=-2\kappa\nu_{0,i}$, $i=1\dots n$; $\rho=\rho_{n+1}=-2\kappa\nu_{0,n+1}$. Similarly, the other $\SLE$ is an $\SLE_{\hat\kappa}(\hat\rho,\underline{\hat\rho})$ with $\hat\rho=\hat\rho_0=-2\hat\kappa\nu_{0,n+1}$, and $\hat\rho_i=-2\hat\kappa\nu_{i,n+1}$.

This situation is studied in \cite{Dub6}, Section 3.2. The following systems are found to solve the local commutation condition: 

\begin{enumerate}
\item $\kappa=\hat\kappa$; $\rho=\hat\rho=\kappa-6$; $\rho_i=\hat\rho_i=0$ (two chordal $\SLE$'s aiming at each other, the reversibility setup)
\item $\kappa=\hat\kappa$; $\rho=\hat\rho=2$; $\rho_i=\hat\rho_i$, $\rho+\sum_i\rho_i=\kappa-6$ ($n-1$ arbitrary parameters)
\item $\kappa\hat\kappa=16$, $\rho=-\kappa/2$, $\hat\rho=-\frac{\hat\kappa} 2$, $\hat\rho_i=-(\hat\kappa/4)\rho_i=-(4/\kappa)\rho_i$, $\rho+\sum_i\rho_i=\kappa-6$ ($n-1$ arbitrary parameters)
\end{enumerate}

In these cases, we have the following expressions for the common partition function (see Lemma \ref{rhodens}):
\begin{align*}
Z(c)&=H(z_0,z_{n+1})^{\frac{6-\kappa}{2\kappa}}\\
Z(c)&=H(z_0,z_{n+1})^{-\frac 1\kappa}\prod_i H(z_0,z_i)^{-\frac{\rho_i}{2\kappa}}H(z_{n+1},z_i)^{-\frac{\rho_i}{2\kappa}}\prod_{i<j}H(z_i,z_j)^{-\frac{\rho_i\rho_j}{4\kappa}}\\
Z(c)&=H(z_0,z_{n+1})^{-\frac 14}\prod_i H(z_0,z_i)^{-\frac{\rho_i}{2\kappa}}H(z_{n+1},z_i)^{\frac{\rho_i}{8}}\prod_{i<j}H(z_i,z_j)^{-\frac{\rho_i\rho_j}{4\kappa}}
\end{align*}

In the third case ($\kappa\hat\kappa=16$), notice that $-\frac{\rho_i}{2\kappa}=\frac{\hat\rho_i}8$, $-\frac{\rho_i\rho_j}{4\kappa}=-\frac{\hat\rho_i\hat\rho_j}{4\hat\kappa}$.

Another explicit situation is when four points are marked, so that there is a single cross-ratio. Consider a configuration $c=(D,x,y,z_1,z_2)$, the marked points in some prescribed order. Let $\nu$ be a parameter and $\beta$ a solution of the quadratic equation $\frac\kappa 2\beta(\beta-1)+2\beta=2\nu$. Define a partition function
$$Z(c)=H_D(x,y)^{\frac {6-\kappa}{2\kappa}}H_D(z_1,z_2)^\nu\psi(u)$$
where $u$ is the cross-ratio $u=\frac{(z_1-x)(z_2-y)}{(y-x)(z_2-z_1)}$ (in the upper half-plane) and 
$$\psi(u)=(u(1-u))^\beta\vphantom{F}_2F_1(2\beta,2\beta+\frac 8\kappa-1;2\beta+\frac 4\kappa;u)$$
where $\vphantom{F}_2F_1$ designates a solution of the hypergeometric equation with parameters $2\beta,\dots$ (this equation is invariant under $u\leftrightarrow 1-u$). If the solution is chosen so that it is positive on the configuration space, then a computation shows that $Z$ satisfies the condition of the theorem and drives two locally commuting $\SLE$'s starting from $x,y$. This covers for instance the following situations: a chordal $\SLE_\kappa$ from $x$ to $y$ conditioned not to intersect the interval $[z_1,z_2]$, $4<\kappa<8$; a chordal $\SLE_{8/3}$ from $x$ to $y$ conditioned not to intersect a restriction measure (with exponent $\nu$) from $z_1$ to $z_2$; and the marginal of a system of two $\SLE$ strands $x\leftrightarrow y$, $z_1\leftrightarrow z_2$ (\cite{Dub7}, Section 4.1; this corresponds to $\nu=\alpha_\kappa$).

\section{Coupling}

Let $c=(D,z_0,z_1,\dots,z_n,z_{n+1})$ be a configuration, where $D$ is a simply connected, bounded domain with $n+2$ distinct marked points on the boundary in some prescribed order. We consider a system of two $\SLE$'s satisfying local commutation, one originating at $z_0$, the other at $z_{n+1}$. These two $\SLE$'s have the $\SLE$ Markov property for domains with $n+2$ marked points. The first one is absolutely continuous (up to a disconnection event) w.r.t. $\SLE_\kappa(D,z_0,z_n)$; the Loewner chain is $(K_s)$, the trace $\gamma$, the measure $\mu_c$. The second one is absolutely continuous (up to a disconnection event) w.r.t. $\SLE_{\hat\kappa}(D,z_{n+1},z_n)$; the Loewner chain is $(\hat K_t)$, the trace $\hat \gamma$, the measure $\hat\mu_c$. We also denote $c_{s,t}=(D\setminus (K_s\cup\hat K_t),\gamma_s,z_1,\dots,z_n,\hat\gamma_t)$.

From Theorem
\ref{class} and the following discussion, we know that $\hat\kappa\in\{\kappa,16/\kappa\}$ and there is a positive conformally invariant function $\psi$ on configurations, weights $\nu_{ij}$, such that for $\tau,\hat\tau$ a pair of stopping times (before disconnection events)
\begin{align}\label{RNgen}
\ell_c(\gamma^\tau,\hat\gamma^{\hat\tau})
&=\left(\frac{Z(c_{\tau,\hat\tau})Z(c_{0,0})}{Z(c_{\tau,0})Z(c_{0,\hat\tau})}\right)\exp(-\lambda m(D;K_\tau,\hat K_{\hat\tau}))\nonumber\\
&=\left(\frac{d\mu_{c_{0,\hat\tau}}^\tau}{d\mu_c^\tau}\right)(\gamma^\tau)
=\left(\frac{d\hat\mu_{c_{\tau,0}}^{\hat\tau}}{d\hat\mu_c^{\hat\tau}}\right)(\hat\gamma^{\hat\tau})
\end{align}
where $Z(c)=\psi(c)\prod_{i<j}H_D(z_i,z_j)^{\nu_{ij}}$. 

Let $\hat\gamma^{\hat\tau}$ be a fixed stopped path. Let $\tau$ be a stopping time for $\gamma$ conditional on $\hat\gamma^{\hat\tau}$, such that a.s. $\gamma^\tau$ is at distance at least $\eta>0$ of $\hat\gamma^{\hat\tau}$ and other marked points. Then we deduce immediately from \eqref{RNgen} that:
\begin{align}\label{Couplgen}
\int \ell_c(\gamma^\tau,\hat\gamma^{\hat\tau})d\mu_c^{\tau}(\gamma^{\tau})=\int d\mu_{c_{0,\hat\tau}}^\tau(\gamma^{\tau})=1&&\forall \hat\gamma^{\hat\tau}
\end{align}
The symmetric statement holds for the same reason.

Define:
$$\ell_{s_1,t_1}^{s_2,t_2}=\ell_{c_{s_1,t_1}}(\gamma^{s_2},\hat\gamma^{t_2})=\left(\frac{Z(c_{s_2,t_2})Z(c_{s_1,t_1})}{Z(c_{s_2,t_1})Z(c_{s_1,t_2})}\right)\exp(-\lambda m(D\setminus (K_{s_1}\cup\hat K_{t_1});K_{s_2},\hat K_{t_2})).$$
We recall the (deterministic) tower property: 
for $0\leq s_1\leq s_2\leq s_3$, $0\leq t_1\leq t_2\leq t_3$ such that $c_{s_3,t_3}$ is a configuration
\begin{equation}\label{towergen}
\ell_{s_1,t_1}^{s_2,t_2}\ell_{s_2,t_1}^{s_3,t_2}=\ell_{s_1,t_1}^{s_3,t_2},{\textrm\ \ \ \ }\ell_{s_1,t_1}^{s_2,t_2}\ell_{s_1,t_2}^{s_2,t_3}=\ell_{s_1,t_1}^{s_2,t_3}.
\end{equation}
The goal of this section is to use these properties to extend a local coupling of $\mu_c,\hat\mu_c$, that exists from local commutation, to a maximal coupling ``up to disconnection". The idea, introduced in \cite{DZrevers} in the case of reversibility, is to consider a sequence of couplings absolutely continuous w.r.t. the independent coupling $\mu_c\otimes\hat\mu_c$ that converges to a maximal coupling. The construction of the coupling presented here somewhat differs from that in \cite{DZrevers}. The existence of a such a maximal coupling relies solely on 
\eqref{Couplgen}, \eqref{towergen}, and the $\SLE$ Markov property.

\subsection{Local coupling}

We briefly discuss here the interpretation of local commutation \eqref{RNgen} in terms of couplings.

A coupling of $\mu_c,\hat\mu_c$ is a measure on pairs of paths $(\gamma,\hat\gamma)$ (or chains $(K,\hat K)$) such that the first marginal is $\mu_c$, the second marginal is $\hat\mu_c$. The trivial (independent) coupling is $\mu_c\otimes\hat\mu_c$.

For simplicity, consider $U,\hat U$ disjoint neighbourhoods of $x,y$ respectively in $D$ at positive distance of other marked points. Let $\tau,\hat\tau$ be the first exit of $U,\hat U$ by $\gamma,\hat\gamma$.
Then we can consider the measure on pairs of stopped paths $(\gamma^\tau,\hat\gamma^{\hat\tau})$ given by:
$$\ell_c(\gamma^\tau,\hat\gamma^{\hat\tau})d\mu_c^\tau(\gamma^\tau)d\hat\mu_c^{\hat\tau}({\hat\gamma}^{\hat\tau})$$
From \eqref{Couplgen}, we know that this is a coupling of the stopped measures $\mu_c^\tau,\hat\mu_c^{\hat\tau}$. One can extend it to a coupling of $\mu_c$, $\hat\mu_c$ as follows: after $\tau$, $\gamma$ is continued as an $\SLE$ in $c_{\tau,0}$ (that is, following $\mu_{c_{\tau,0}}$) independent of the rest conditionally on $c_{\tau,0}$; $\hat\gamma$ is continued in a similar way. This describes a coupling of $\mu_c$, $\hat\mu_c$, using the (strong) Markov property. It is clear that this procedure describes the measure:
$$\ell_c(\gamma^\tau,\hat\gamma^{\hat\tau})d\mu_c(\gamma)d\hat\mu_c({\hat\gamma})$$
on pairs of paths $(\gamma,\hat\gamma)$. This coupling is local in the sense that the interaction is restricted to the time set $[0,\tau]\times [0,\hat\tau]$. In the next subsection, we will extend the interaction to the (random) time set $\{(s,t):\ K_s\cap\hat K_t=\varnothing\}$.

\subsection{Maximal coupling}

The goal here is to construct explicit couplings of $\mu_c,\hat\mu_c$ parameterized by a small parameter $\eta>0$ such that any subsequential limit as $\eta\searrow 0$ is a maximal coupling.

We assume $\eta\ll\dist(z_0,z_{n+1})\leq \diam(D)<\infty$. (The domain $D$ is embedded in the plane and distances are measured in the ambient plane).

Define by induction the stopping times: $\tau_0=0$; 
$$\tau_{i+1}=\inf\{t\geq 0: \gamma_t\notin (K_{\tau_i})^\eta\}$$
where $A^\eta=\{x\in D: \dist(x,A)<\eta\}$. Then $\tau_i$ is a function of the path $\gamma$ (taking a modification where traces exist and are continuous). The sequence $\tau_i$ is strictly increasing until it reaches $\infty$. 

If $\tau_n<\infty$, then for all $i<j\leq n$, $\dist(\gamma_{\tau_i},\gamma_{\tau_j})\geq\eta$, since $\gamma_{\tau_j}$ is outside of $(K_{\tau_{j-1}})^\eta$. Thus $n\pi(\frac\eta 2)^2\leq area(D+B(0,\eta))$. This gives a fixed $N=\lfloor 4area(D+B(0,\eta))/\pi\rfloor$ such that $\tau_N=\infty$.

Similarly, define $\hat\tau=0$, and by induction:
$$\hat\tau_{j+1}=\inf\{t\geq 0: \hat\gamma_t\notin (\hat K_{\hat\tau_j})^\eta\}.$$
We now introduce a dependence on the other path. Let $\partial$ be the smallest connected boundary arc of $D$ containing all marked points except $z_0$; symmetrically, $\hat\partial$ be the smallest connected boundary arc of $D$ containing all marked points except $z_{n+1}$. These two arcs overlap in general.

We define a (random) set $G\subset \N^2$ of good pairs of indices as follows:
$$\{(i,j)\in G\}=\{\dist(K_{\tau_i},\hat K_{\hat\tau_j})>3\eta\}\cap\{\dist(K_{\tau_i},\partial)>2\eta\}\cap\{\dist(\hat K_{\hat\tau_j},\hat\partial)>2\eta\}.$$
(Here, $\dist(K,\hat K)=\inf_{x\in K,\hat x\in \hat K}(\dist(x,\hat x))$). This is a separation condition. If $i'\leq i$, $j'\leq j$, $(i,j)\in G$, then a fortiori $(i',j')\in G$. We take $\eta$ small enough so that $(0,0)\in G$. 

For conciseness, denote $$\ell_{ij}^{i'j'}=\ell_{\tau_i,\hat\tau_j}^{\tau_{i+1},\hat\tau_{j+1}}=\ell_{c_{\tau_i,\hat\tau_j}}(\gamma^{\tau_{i'}},\hat\gamma^{\hat\tau_{j'}})$$
for $i\leq i'$, $j\leq j'$. 
If $\tau_i=\tau_{i'}$, set $\ell_{ij}^{i'j'}=1$ (this is the case if $i=i'$ or $\tau_i=\tau_{i'}=\infty$).

Let $(i,j)\in G$. Then $K_{\tau_i}$ is at distance at least $3\eta$ from $\hat K_{\hat\tau_j}$ and at least $2\eta$ from $\partial$. Thus $K_{\tau_{i+1}}\subset \overline{(K_{\tau_i})^\eta}$ is at distance at least $2\eta$ from $\hat K_{\hat\tau_j}$, at least $\eta$ from $\hat K_{\hat\tau_{j+1}}$, and at least $\eta$ from $\partial$. Symmetric statements hold for $\hat K_{\hat\tau_j}$. Thus $\ell_{i,j}^{i+1,j+1}$ is well defined and by Lemma \ref{Lbound} uniformly bounded by some $C=C(D,\eta)$.

Define
\begin{align*}
I(j)&=\inf\{i\in\N: (i,j)\notin G\}\\
J(i)&=\inf\{j\in\N: (i,j)\notin G\}.
\end{align*}
Plainly, $I$ and $J$ are nonincreasing.
If ${\mc F}$ (resp. $\hat{\mc F}$) is the filtration generated by $\gamma$ (resp. $\hat\gamma$), $\tau_i$ is an ${\mc F}$-stopping time. Also, the event $\{(i,j)\in G\}$ is $({\mc F}_{\tau_i}\vee\hat{\mc F}_{\hat\tau_j})$-measurable. It follows that $\tau_{I(j)}$ is a stopping time in the enlarged filtration $({\mc F}_t\vee\hat{\mc F}_{\hat\tau_j})_t$. Symmetric statements hold for $\hat\tau_j$, $\hat\tau_{J(i)}$.

Consider the measure $\Theta^\eta_c$ on pairs $(\gamma,\hat\gamma)$:
$$d\Theta^\eta_c(\gamma,\hat\gamma)=\left [\prod_{(i,j)\in G}\ell_{i,j}^{i+1,j+1}\right]d\mu_c(\gamma)d\hat\mu_{\hat c}(\hat\gamma)$$

The density is a function on pairs of paths $(\gamma,\hat\gamma)$. 
From the tower property (\ref{tower}), one can rewrite this density as:
$$L=\prod_{(i,j)\in G}\ell_{i,j}^{i+1,j+1}=\prod_{0\leq i\leq N}\ell_{i,0}^{i+1,J(i)}=\prod_{0\leq j\leq N}\ell_{0,j}^{I(j),j+1}$$
showing in particular that $L$ is bounded by $C^N$. The last two expressions of $L$ will behave well in combination with the Markov property of $\gamma$, $\hat\gamma$ respectively.

\begin{Lem}
The measure $\Theta_c^\eta=L(\mu_c\otimes\hat\mu_c)$ is a coupling of $\mu_c,\hat\mu_c$.
\end{Lem}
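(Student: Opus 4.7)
\noindent The plan is to verify both marginals; the argument for the $\gamma$-marginal is symmetric to that for the $\hat\gamma$-marginal, so I focus on the latter, i.e.\ on showing that for $\hat\mu_c$-a.e.\ $\hat\gamma$, $\int L(\gamma,\hat\gamma)\,d\mu_c(\gamma)=1$.

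First, I re-express $L$ in a form adapted to the filtration of $\gamma$. Iterating the second identity in \eqref{towergen} in $j$ for each fixed $i$ gives the telescoping $\prod_{j=0}^{J(i)-1}\ell_{i,j}^{i+1,j+1}=\ell_{i,0}^{i+1,J(i)}$. Since, for fixed $i$, $(i,j)\in G\iff j<J(i)$, this rearranges the total product as
\[
L \;=\; \prod_{i=0}^{N-1}\ell_{i,0}^{i+1,J(i)}.
\]
This representation has two useful features: the $i$-th factor depends on $\gamma$ only through $\gamma^{\tau_{i+1}}$ and on $\hat\gamma$ only through $\hat\gamma^{\hat\tau_{J(i)}}$; and $J(i)$ is $\sigma(K_{\tau_i})\vee\hat{\mc F}_\infty$-measurable. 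The uniform bound $L\leq C^N$ established just above guarantees integrability and justifies the iterated conditional expectations below.

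I then integrate $\gamma$ out by reverse induction on $i$. Fix $\hat\gamma$, and at step $i$ condition on ${\mc F}_{\tau_i}\vee\sigma(\hat\gamma)$. By the strong Markov property of $\mu_c$, the continuation of $\gamma$ past $\tau_i$ is distributed as $\mu_{c_{\tau_i,0}}$, and $\tau_{i+1}$, the first exit of this continuation from $(K_{\tau_i})^\eta$, is a stopping time for it. The margins built into the definition of $G$ ($3\eta$ between the two hulls and $2\eta$ between each hull and the ``forbidden'' boundary arcs $\partial,\hat\partial$) survive the one-step inflation $K_{\tau_i}\to K_{\tau_{i+1}}\subset\overline{(K_{\tau_i})^\eta}$, leaving at least $\eta$ of separation between $K_{\tau_{i+1}}$ and both $\hat K_{\hat\tau_{J(i)}}$ and the remaining marked points. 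This is exactly the hypothesis needed to apply \eqref{Couplgen} in the configuration $c_{\tau_i,0}$, which then yields
\[
E\!\left[\ell_{i,0}^{i+1,J(i)}\,\big|\,{\mc F}_{\tau_i},\hat\gamma\right]
\;=\;\int\ell_{c_{\tau_i,0}}(\gamma^{\tau_{i+1}},\hat\gamma^{\hat\tau_{J(i)}})\,d\mu_{c_{\tau_i,0}}^{\tau_{i+1}}(\gamma^{\tau_{i+1}})\;=\;1.
\]
All other factors being ${\mc F}_{\tau_i}\vee\sigma(\hat\gamma)$-measurable, the induction descends from $i=N-1$ to $i=0$ (the recursion terminating since $\tau_N=\infty$), giving $E_{\mu_c}[L\mid\hat\gamma]=1$ for $\hat\mu_c$-a.e.\ $\hat\gamma$, so the $\hat\gamma$-marginal of $\Theta_c^\eta$ is indeed $\hat\mu_c$. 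Applying the first identity in \eqref{towergen} to obtain the alternative representation $L=\prod_j\ell_{0,j}^{I(j),j+1}$ and running the same induction with the strong Markov property of $\hat\mu_c$ in place of that of $\mu_c$ shows, symmetrically, that the $\gamma$-marginal is $\mu_c$.

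The main obstacle is precisely the separation bookkeeping at each inductive step: one must confirm that after one $\eta$-inflation of $K_{\tau_i}$, the pair $(\gamma^{\tau_{i+1}},\hat\gamma^{\hat\tau_{J(i)}})$ in the configuration $c_{\tau_i,0}$ still satisfies the positive-distance hypotheses of \eqref{Couplgen}. The two-tier margins in the definition of $G$ are tuned for exactly this, so that the argument reduces, modulo this careful accounting, to a routine iteration of the strong Markov property of $\SLE$ against the tower-telescoped product.
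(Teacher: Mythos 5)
Your proposal is correct and follows essentially the same route as the paper: the same tower-property telescoping of $L$ into $\prod_i\ell_{i,0}^{i+1,J(i)}$, the same measurability observations about $J(i)$ and $\hat\gamma^{\hat\tau_{J(i)}}$, and the same conditional application of \eqref{Couplgen} at each step. The paper merely packages your reverse induction as the statement that $M_n=\prod_{i<n}\ell_{i,0}^{i+1,J(i)}$ is a bounded discrete-time martingale in $({\mc F}_{\tau_n}\vee\hat{\mc F}_\infty)_n$, which is the same computation.
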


\begin{proof}
The statement can be rephrased as
\begin{align*}
\forall \hat\gamma,\E(L)=1&&
\forall \gamma,\hat\E(L)=1
\end{align*} 
where $\E$, $\hat\E$ refer to integration w.r.t. $d\mu_c(\gamma)$, $d\hat\mu_{\hat c}(\hat\gamma)$ respectively (in other terms $\E(.)=\E\otimes\hat\E(.|\hat{\mc F}_\infty)$). The situation is completely symmetric, so we shall consider only the distribution of the second marginal; that is, we have to check that given any $\hat\gamma$, $\E(L)=1$.

For this, the relevant expression of the density is: $L=\prod_{0\leq i\leq N}\ell_{i,0}^{i+1,J(i)}$. Notice that $\hat\gamma^{\hat\tau_{J(i)}}$ is $({\mc F}_{\tau_i}\vee\hat{\mc F}_\infty)$-measurable.
This implies that a term $\ell_{i,0}^{i+1,J(i)}$ is $({\mc F}_{\tau_n}\vee\hat{\mc F}_\infty)$-measurable for any $i<n$. Thus for fixed $n$:
$$\E\left(\prod_{i=0}^n\ell_{i,0}^{i+1,J(i)}|{\mc F}_{\tau_n}\right)=\left[\prod_{i=0}^{n-1}\ell_{i,0}^{i+1,J(i)}\right]\E(\ell_{n,0}^{n+1,J(n)}|{\mc F}_{\tau_n})$$
and $\E(\ell_{n,0}^{n+1,J(n)}|{\mc F}_{\tau_n})=1$ by (\ref{Couplgen}); the point is that given $({\mc F}_{\tau_n}\vee\hat{\mc F}_\infty)$, $\gamma^{\tau_n}$ and $\hat\gamma^{\hat\tau_{J(n)}}$ are fixed. %
This is saying that 
$$M_n=\prod_{i=0}^{n-1}\ell_{i,0}^{i+1,J(i)}$$
is a (discrete time, bounded) martingale in the filtration $({\mc F}_{\tau_n}\vee\hat{\mc F}_\infty)_{n\geq 0}$. In particular, $\E(L)=\E(M_N)=M_0=1$.
\end{proof}

We can actually get a more precise statement. Let $\tau$ be an arbitrary ${\mc F}$-stopping time; let $n=\inf\{i\in\N:\tau_i\geq\tau\}$, a random integer. We assume that $\dist(\gamma^\tau,\partial)\geq 3\eta$ a.s., so that $(n,0)\in G$. Then $\tau_n$ is a stopping time approximating $\tau$ (as $\eta\searrow 0$). Consider the joint distribution of $(\gamma^{\tau_n},\hat\gamma^{\hat\tau_{J(n)}})$ under $\Theta^\eta$. Then $\gamma^{\tau_n}$ has distribution $\mu_c^{\tau_n}$. Moreover:
\begin{align*}
\E\otimes\hat\E(L|{\mc F}_{\tau_n}\vee\hat{\mc F}_{\hat\tau_{J(n)}})&=\left[\prod_{\substack{i<n\\j<J(n)}}\ell_{i,j}^{i+1,j+1}\right]\E\otimes\hat\E\left(\prod_{\substack{i\geq n\\ j<J(i)}}\ell_{i,j}^{i+1,j+1}\prod_{\substack{j\geq J(n)\\i<I(j)}}\ell_{i,j}^{i+1,j+1}|{\mc F}_{\tau_n}\vee\hat{\mc F}_{\hat\tau_{J(n)}}\right)\\
&=\left[\ell_{0,0}^{n,J(n)}\right]\E\otimes\hat\E\left(\prod_{i\geq n}\ell_{i,0}^{i+1,J(i)}\prod_{j\geq J(n)}\ell_{0,j}^{I(j),j+1}|{\mc F}_{\tau_n}\vee\hat{\mc F}_{\hat\tau_{J(n)}}\right)
\end{align*}
The term $\prod_{i\geq n}\ell_{i,0}^{i+1,J(i)}$ involves $\gamma$ after $\tau_n$ and $\hat\gamma$ before $\hat\tau_{J(n)}$ (if $i\geq n$, $J(i)\leq J(n)$); the other term, $\prod_{j\geq J(n)}\ell_{0,j}^{I(j),j+1}$, involves $\hat\gamma$ after $\hat\tau_{J(n)}$ and $\gamma$ before $\tau_n$. Hence these two terms are independent conditionally on $({\mc F}_{\tau_n}\vee\hat{\mc F}_{\hat\tau_{J(n)}})$ (under the independent measure $\mu_c\otimes\hat\mu_c$). 
Moreover
$$\E(\prod_{i\geq n}\ell_{i,0}^{i+1,J(i)}|{\mc F}_{\tau_n})=\E(\frac{M_N}{M_n}|{\mc F}_{\tau_n})=1$$
where $(M_.)$ is the discrete-time, bounded martingale considered in the previous lemma, since $n$ is a stopping time for its discrete filtration. Similarly, $\hat\E(\prod_{j\geq J(n)}\ell_{0,j}^{I(j),j+1}|\hat{\mc F}_{\hat\tau_{J(n)}})=1$, and consequently:
\begin{align*}
\E\otimes\hat\E\left(L|{\mc F}_{\tau_n}\vee\hat{\mc F}_{\hat\tau_{J(n)}}\right)&=(\ell_{0,0}^{n,J(n)})\E\otimes\hat\E\left(\prod_{i\geq n}\ell_{i,0}^{i+1,J(i)}|{\mc F}_{\tau_n}\vee\hat{\mc F}_{\hat\tau_{J(n)}}\right)\E\otimes\hat\E\left(\prod_{j\geq J(n)}\ell_{0,j}^{I(j),j+1}|{\mc F}_{\tau_n}\vee\hat{\mc F}_{\hat\tau_{J(n)}}\right)\\
&=(\ell_{0,0}^{n,J(n)})\hat\E\left[\E\left[\prod_{i\geq n}\ell_{i,0}^{i+1,J(i)}|{\mc F}_{\tau_n}\right]|\hat{\mc F}_{\hat\tau_{J(n)}}\right]
\E\left[\hat\E\left[\prod_{j\geq J(n)}\ell_{0,j}^{I(j),j+1}|\hat{\mc F}_{\hat\tau_{J(n)}}\right]|{\mc F}_{\tau_n}\right]\\
&=\ell_{0,0}^{n,J(n)}=
\frac{d\hat\mu_{c_{\tau_n,0}}^{\hat\tau_{J(n)}}}{d\hat\mu_{c}^{\hat\tau_{J(n)}}}(\hat\gamma^{\hat\tau_{J(n)}}).
\end{align*}
This proves that under $\Theta^\eta$, the conditional distribution of $\hat\gamma^{\hat\tau_{J(n)}}$ given $\gamma^{\tau_n}$ is 
$\hat\mu_{c_{\tau_n,0}}^{\hat\tau_{J(n)}}$, where $\hat\tau_{J(n)}$ is a stopping time conditionally on $\gamma^{\tau_n}$.

To phrase the following theorem, it is convenient to introduce:
\begin{align*}
\sigma&=\sup\{t\geq 0: K_t\cap \partial=\varnothing\} \\
\hat\sigma&=\sup\{t\geq 0: \hat K_t\cap \hat\partial=\varnothing\}
\end{align*}
The measures $\mu_c$, $\hat\mu_c$ are defined on paths $(\gamma,\hat\gamma)$ up to $\sigma,\hat\sigma$.

\begin{Thm}\label{maxcoupl}
Let $\mu_c,\hat\mu_c$ be $\SLE$ measures in a configuration $(D,z_0,\dots,z_{n+1})$ satisfying local commutation.
Then there exists a coupling $\Theta$ of $\mu_c$, $\hat\mu_c$ which is maximal in the following sense:

for any ${\mc F}$-stopping time $\tau$, $\tau\leq\sigma$, let $\hat\tau$ be the $({\mc F}_\tau\vee\hat{\mc F}_t)_t$-stopping time:
$$\hat\tau=\sup\{t\geq 0: \hat K_t\cap (K_\tau\cup\hat\partial)=\varnothing\}.$$
Then under $\Theta$, the pair $(\gamma^\tau,\hat\gamma^{\hat\tau})$ has the following distribution: $\gamma^\tau$ is distributed according to $\mu_c^\tau$, and conditionally on $\gamma^\tau$, $\hat\gamma^{\hat\tau}$ is distributed according to $\hat\mu^{\hat\tau}_{c_{\tau,0}}$.
The symmetric statement holds.
\end{Thm}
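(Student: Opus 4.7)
The plan is to obtain $\Theta$ as a subsequential limit of the explicit measures $\Theta^\eta_c = L_\eta\,(\mu_c\otimes\hat\mu_c)$ built above, and then to pass the conditional identity already proved at the discrete $\eta$-scale to the limit. Since both marginals of $\Theta^\eta_c$ equal $\mu_c,\hat\mu_c$, which are Radon on the respective spaces of chains, the family $\{\Theta^\eta_c\}_{\eta>0}$ is tight on pairs of chains. Extract any weak subsequential limit $\Theta$ along $\eta_k\searrow 0$. Weak convergence preserves the marginals, so $\Theta$ is a coupling of $\mu_c$ and $\hat\mu_c$.

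Next I would identify the stopping times in the limit. Given an $\mathcal F$-stopping time $\tau\leq\sigma$, one first treats the case $\dist(\gamma^\tau,\partial)\geq 3\eta_0$ a.s.\ for some $\eta_0>0$, and sets $n(\eta)=\inf\{i:\tau_i\geq\tau\}$. Since $\tau_{i+1}-\tau_i$ corresponds to the $\gamma$-trace moving distance $\eta$ out of $K_{\tau_i}$, continuity of $\gamma$ yields $\tau_{n(\eta)}\to\tau$ a.s.\ as $\eta\searrow 0$, hence $K_{\tau_{n(\eta)}}\to K_\tau$ in the chain topology of Section 2. For the conjugate time, note that $(i,j)\in G$ requires simultaneously $\dist(K_{\tau_i},\hat K_{\hat\tau_j})>3\eta$, $\dist(K_{\tau_i},\partial)>2\eta$, and $\dist(\hat K_{\hat\tau_j},\hat\partial)>2\eta$. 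As $\eta\searrow 0$ and $i=n(\eta)$, the two boundary conditions become vacuous on the event $\dist(\gamma^\tau,\partial)>0$, and the separation condition squeezes $\hat\tau_{J(n(\eta))}$ to
\[
\hat\tau=\sup\{t\geq 0:\hat K_t\cap(K_\tau\cup\hat\partial)=\varnothing\},
\]
by continuity of $\hat\gamma$ and of the distance functional. A short argument using the $\hat\gamma$-continuity from both sides gives the a.s.\ convergence $\hat\tau_{J(n(\eta))}\to\hat\tau$, hence $\hat\gamma^{\hat\tau_{J(n(\eta))}}\to\hat\gamma^{\hat\tau}$ in the chain topology.

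The calculation preceding the theorem shows that under $\Theta^\eta_c$, conditionally on $\gamma^{\tau_{n(\eta)}}$ the stopped chain $\hat\gamma^{\hat\tau_{J(n(\eta))}}$ has law $\hat\mu^{\hat\tau_{J(n(\eta))}}_{c_{\tau_{n(\eta)},0}}$. To pass to the limit, I would test this identity against a bounded continuous functional $F(\gamma^\tau)\hat F(\hat\gamma^{\hat\tau})$ and rewrite
\[
\int F(\gamma^{\tau_{n(\eta)}})\hat F(\hat\gamma^{\hat\tau_{J(n(\eta))}})\,d\Theta^\eta_c
=\int F(\gamma^{\tau_{n(\eta)}})\Bigl(\int \hat F\,d\hat\mu^{\hat\tau_{J(n(\eta))}}_{c_{\tau_{n(\eta)},0}}\Bigr)d\mu_c(\gamma).
\]
Weak convergence of $\Theta^{\eta_k}_c$ controls the left-hand side; for the right-hand side, I would use the absolute continuity expression \eqref{RNgen} together with Proposition \ref{Pdens} and the explicit form of $Z$ to show that, as $\eta\searrow 0$, the inner integral converges pointwise in $\gamma$ to $\int\hat F\,d\hat\mu^{\hat\tau}_{c_{\tau,0}}$ and is uniformly bounded (via Lemma \ref{Lbound}, giving uniform $L^\infty$ control of the densities on the relevant subconfigurations). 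Dominated convergence then identifies the limit, yielding the claimed conditional law of $\hat\gamma^{\hat\tau}$ given $\gamma^\tau$. The symmetric conditional statement is obtained by swapping the roles of $\gamma$ and $\hat\gamma$ using the dual expression $L=\prod_j\ell_{0,j}^{I(j),j+1}$.

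The general case $\tau\leq\sigma$ (without the uniform distance assumption to $\partial$) is reduced to the above by replacing $\tau$ with $\tau\wedge\sigma_\delta$, where $\sigma_\delta=\inf\{t:\dist(\gamma_t,\partial)\leq\delta\}$, and letting $\delta\searrow 0$ after $\eta\searrow 0$; the consistency of the family as $\delta$ decreases is automatic from the tower property \eqref{towergen}, so a diagonal argument produces a single limiting coupling $\Theta$ on full paths. The main obstacle I anticipate is precisely the continuity step in the middle paragraph: the map $\gamma\mapsto \hat\mu^{\hat\tau(\gamma)}_{c_{\tau(\gamma),0}}(\hat F)$ is only well behaved when the stopping rule defining $\hat\tau$ interacts nicely with Carath\'eodory convergence of $K_{\tau_{n(\eta)}}$ to $K_\tau$, and one must rule out pathological configurations where $\hat\gamma$ hugs $K_\tau\cup\hat\partial$ in such a way that $\hat\tau_{J(n(\eta))}$ fails to converge to $\hat\tau$; Lemma \ref{Lbound} together with the a.s.\ non-pinning properties of $\SLE$ (Section 6) is what makes this go through.
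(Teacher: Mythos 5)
Your overall strategy coincides with the paper's: tightness of $(\Theta^\eta)$ via the fixed Radon marginals, a subsequential weak limit $\Theta$ which is automatically a coupling, and passage to the limit in the identity, established just before the theorem, that under $\Theta^\eta$ the conditional law of $\hat\gamma^{\hat\tau_{J(n)}}$ given $\gamma^{\tau_n}$ is $\hat\mu^{\hat\tau_{J(n)}}_{c_{\tau_n,0}}$. The reduction of a general $\tau\leq\sigma$ by truncating near $\partial$ and removing the truncation at the end also matches the paper.

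The gap is the limit passage itself, which you correctly flag as ``the main obstacle'' but do not resolve, and it is not the a.s.\ non-pinning of $\hat\gamma$ together with Lemma \ref{Lbound} that makes it go through. Two points. First, $F(\gamma^{\tau})$ is not a continuous functional of the chain for a general stopping time $\tau$, so ``weak convergence of $\Theta^{\eta_k}$ controls the left-hand side'' is unjustified as stated: the stopping rule must be evaluated on the approximating samples (the couplings $\Theta^{\eta_k}$ live on different pairs of paths), and it must be stable under Carath\'eodory convergence. The paper handles this by (i) passing to an a.s.-convergent realization $(\gamma^k,\hat\gamma^k)\rightarrow(\gamma,\hat\gamma)$ and (ii) replacing $\tau$ by a stopping time of type $\tau'''=\inf\{t_i: K_{t_i}\in U_i\}\wedge\sup\{t:\dist(\gamma_t,\partial)\geq\eps'\}$ with the $U_i$ \emph{open}; openness of the $U_i$ is exactly what forces $\tau^k\rightarrow\tau$ and $K^k_{\tau^k_n}\rightarrow K_\tau$. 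Second, rather than proving $\hat\tau_{J(n(\eta))}\rightarrow\hat\tau$ directly (which again compares times defined on different samples and is delicate when $\hat\gamma$ hugs $K_\tau$), the paper only uses the sandwich that $\hat\tau_{J(n)}$ lies between the entrance times of $(K^k_{\tau^k_n})^{3\eta_k}$ and of $(K^k_{\tau^k_n})^{\eta_k}$; it then fixes $\eps>0$, shows that the conditional law of $\hat\gamma$ stopped upon entering $(K_\tau)^\eps$ is the corresponding stopped $\SLE$ in $c_{\tau,0}$ (using Carath\'eodory convergence of $c^k_{\tau^k_n,0}$ to $c_{\tau,0}$), and only afterwards lets $\eps\searrow 0$. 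Supplying these two devices would turn your outline into a complete proof.
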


\begin{proof}
For any $\eta>0$, the two marginal distributions of $\Theta^\eta$ are fixed Radon measures (in the topology of Carath\'eodory convergence of Loewner chains). Thus the family $(\Theta^\eta)_{\eta>0}$ is tight, and Prokhorov's theorem ensures existence of subsequential limits. 
Let $(\eta_k)_k$ be a sequence $\eta_k\searrow 0$ along which $\Theta^{\eta_k}$ has a weak limit $\Theta$. Then $\Theta$ is a coupling of $\mu_c$ and $\hat\mu_{c}$.
We can consider a probability space with sample $((\gamma^1,\hat\gamma^1),\dots,(\gamma^k,\hat\gamma^k),\dots)$ such that the distribution of $(\gamma^k,\hat\gamma^k)$ is $\Theta^{\eta_k}$ and $(\gamma^k,\hat\gamma^k)\rightarrow (\gamma,\hat\gamma)$ a.s., where the distribution of $(\gamma,\hat\gamma)$ is $\Theta$.

Let $\tau$ be an ${\mc F}$-stopping time; we approximate $\tau$ in a convenient way. Firstly, $\tau$ can be approximated by $\tau'$ taking values in some discrete countable sequence $(t_i)_{i\geq 0}$ (eg dyadic times). Hence there are Borel sets $B_i$ such that $\ind_{B_i}$ is a Borel function of $K^{t_i}$ and $\tau'=\inf\{t_i: K_{t_i}\in B_i\}$. Replace the Borel set $B_i$ by a larger open set $U_i$ such that the measure of $U_i\setminus B_i$ is very small. Then  $\tau''=\inf\{t_i: K_{t_i}\in U_i\}$ is a stopping time equal to $\tau'$ with probability arbitrarily close to 1. Finally, let $\tau'''=\tau''\wedge\sup\{t:\dist(\gamma_t,\partial)\geq\eps'\}$ for some fixed $\eps'>0$.
Let us assume for now that $\tau$ is of type $\tau'''$. This gives a common stopping rule for all the chains $K^k_.$: stop the first time that $K^k_{t_i}$ is in $U_i$ or at distance $\eps$ of $\partial$. We denote $\tau^k$ this stopping time for the chain $K^k_.$. In particular, $\tau^k\rightarrow\tau$ a.s. (using that the $U_i$'s are open).

For $\eta>0$, $\tau^k_n$ is an approximation of $\tau^k$ as above:
$\tau^k_n=\inf\{\tau_i: \tau_i\geq\tau^k\}$; then 
$$K_{\tau^k_{n-1}}^k\subset K^k_{\tau^k}\subset K^k_{\tau^k_n}\subset (K^k_{\tau^k_{n-1}})^{\eta_k}.$$
It is easy to see that $\tau^k_n\rightarrow\tau$, $\tau^k_{n-1}\rightarrow\tau$, $K^k_{\tau^k_n}\rightarrow K_\tau$ and $\gamma^k_{\tau^k_n}\rightarrow\gamma_\tau$ (since $\gamma_\tau=\cap_{s>0}\overline{K_{\tau+s}\setminus K_\tau}$) as $k\rightarrow\infty$.
We have seen that the conditional distribution of $\hat\gamma_k^{\hat\tau_{J(n)}}$ is $\hat\mu_{c_{\tau_n,0}}^{\hat\tau_{J(n)}}$. Notice that $\hat\tau_{J(n)}$ occurs after first entrance in $(K^k_{\tau^k_n})^{3\eta_k}$ and before entrance in $(K^k_{\tau^k_n})^{\eta_k}$. 

For fixed $\eps>0$, $K^k_{\tau^k_n}\subset (K_\tau)^\eps$ for $k$ large enough. The configuration $\hat c^k_{\tau^k_n,0}$ converges in the Carath\'eodory topology to $c_{\tau,0}$ (with also convergence of $\gamma^k_{\tau^k_n}$ to $\gamma_\tau$); this implies weak convergence of the conditional distribution of $\hat\gamma_k$ stopped when entering $(K_\tau)^\eps$ to the corresponding stopped $\SLE$ in $c_{\tau,0}$. This gives the correct conditional distribution of $\hat\gamma$ stopped when entering $(K_\tau)^\eps$, conditional on $\gamma^\tau$. One concludes by taking $\eps\searrow 0$.

This proves the result for a dense set of stopping times of type $\tau'''$ as above (this will be enough to draw geometric consequences). A general stopping time $\tau\leq\sigma$ is the limit of a sequence of stopping times $\tau'''_m$; for each $m$, the conditional distribution of $\hat\gamma$ stopped upon entering $(K_{\tau'''_m})^\eps$ is correct. One concludes by taking $m\rightarrow\infty$ and then $\eps\searrow 0$.
\end{proof}

There are some obvious extensions of this result. One involves radial $\SLE$'s (not necessarily aiming at the same bulk point). Another involves systems of $n$ (pairwise) commuting $\SLE$'s. Let us discuss this case briefly.

Consider a configuration $c=(D,z_1,\dots,z_n,z_{n+1},\dots,z_{n+m})$, with $n$ $\SLE$'s starting at $z_1,\dots,z_n$, driven by the same partition function $Z$. One can reason as above (sampling the $\SLE$'s at discrete times $\tau^1_{i_1},\dots,\tau^n_{i_n}$). In a maximal coupling, one can stop the first $\SLE$ at a stopping time $\tau^1$, the second at $\tau^2$ (first time it ceases to be defined or meets $K^1_{\tau^1}$), \dots , the $n$-th at $\tau^n$ (first time it ceases to be defined or meets $\cup_{i=1}^{n-1}K^i_{\tau^i}$) and get the appropriate joint distribution. This works for any permutation of indices.

Let us describe the local coupling in this case: let $U_1,\dots,U_n$ be disjoint neighbourhoods of $z_1,\dots,z_n$, $\mu^1_.,\dots,\mu^n_.$ the commuting $\SLE$ measures, $Z$ their common partition function. 
Let $K_i$ be the hull of the stopped $i$-th $\SLE$; 
$c_{\eps_1\dots\eps_n}$, $\eps_i\in\{0,1\}$, is the configuration where the $i$-th $\SLE$ has grown (until stopped) if $\eps_i=1$. Consider the density
$$L=\frac{d\mu^n_{c_{1\dots10}}}{d\mu^n_{c_{0\dots 0}}}\cdot\frac{d\mu^{n-1}_{c_{1\dots100}}}{d\mu^{n-1}_{c_{0\dots 0}}}\cdots\frac{d\mu^2_{c_{10\dots 0}}}{d\mu^2_{c_{0\dots 0}}}=\frac{Z(c_{1\dots 1})Z(c_{0\dots 0})^{n-1}}{Z(c_{10\dots 0})\dots Z(c_{0\dots 01})}\exp\left(-\lambda\sum_{j=2}^n m(D,\cup_{i=1}^{j-1}K_i,K_j)\right)$$
Then it is clear from the first expression that the first marginal of $L(\mu_c^1\otimes\cdots\otimes \mu_c^n)$ is $\mu_c^1$ (integrating out $K_n$, then $K_{n-1}$, \dots); the second expression shows that the construction is symmetric (for a discussion of the loop measure contribution, see Section 3.4 of \cite{Dub7}).

\section{Geometric consequences}

We have proved (Theorem \ref{maxcoupl}) existence of maximal couplings under a local commutation assumption. On the other hand, the systems of $\SLE$'s satisfying this assumption are classified (Theorem \ref{class}). So we can now apply the existence of maximal couplings to appropriate systems of commuting $\SLE$'s to extract information on the geometry of $\SLE$ curves.

\subsection{Reversibility}

Reversibility for $\kappa\in (0,4]$ is proved in \cite{DZrevers}. We review the result for the reader's convenience.

\begin{Thm}
If $\kappa\leq 4$, $\SLE$ is reversible; any maximal coupling $\Theta$ of chordal $\SLE_\kappa$ in $(D,x,y)$ with $\SLE_\kappa$ in $(D,y,x)$ is the coupling of $\SLE$ with its reverse trace.
\end{Thm}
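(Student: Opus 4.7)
The plan is to feed Theorem \ref{maxcoupl} a pair of chordal $\SLE_\kappa$'s aimed at each other, and exploit the non-boundary-hitting property of $\SLE_\kappa$ for $\kappa \leq 4$. Two such $\SLE$'s fit into case $(1)$ of the list of commuting systems following Theorem \ref{class} (namely $\hat\kappa = \kappa$, $\rho = \hat\rho = \kappa - 6$, common partition function $Z(c) = H_D(x,y)^{(6-\kappa)/(2\kappa)}$), so Theorem \ref{maxcoupl} supplies a maximal coupling $\Theta$ of $\mu_c$ and $\hat\mu_c$. Note that with only two marked points one has $\partial = \{y\}$ and $\hat\partial = \{x\}$, so $\sigma = \infty$ in the standard capacity parametrization and every finite deterministic time is an admissible $\gamma$-stopping time.

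Fix such a $\tau$. Theorem \ref{maxcoupl} says that under $\Theta$, conditionally on $\gamma^\tau$, $\hat\gamma^{\hat\tau}$ is a chordal $\SLE_\kappa$ from $y$ to $\gamma_\tau$ in the slit domain $D \setminus K_\tau$, stopped at the first time $\hat\tau$ it hits $K_\tau \cup \{x\}$. By \cite{RS01}, for $\kappa \leq 4$ the $\SLE_\kappa$ trace is a.s.\ a simple curve avoiding the boundary of its ambient domain except at its two endpoints. Applied in $D \setminus K_\tau$, this means $\hat\gamma$ does not touch the slit $K_\tau$ or the marked point $x$ before reaching $\gamma_\tau \in \partial(D \setminus K_\tau)$. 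Hence $\hat\tau$ is the full duration of the conditional SLE and $\hat\gamma^{\hat\tau}$ terminates at $\gamma_\tau$; in particular $\gamma_\tau \in \hat\gamma$ holds $\Theta$-almost surely.

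Running this argument for $\tau$ in a countable dense subset $\{\tau_n\} \subset [0,\infty)$, and invoking continuity of $s \mapsto \gamma_s$ together with closedness of the range of $\hat\gamma$, we conclude $\gamma \subset \hat\gamma$ $\Theta$-a.s. The symmetric statement of Theorem \ref{maxcoupl}, with the roles of $\gamma$ and $\hat\gamma$ interchanged, gives the reverse inclusion $\hat\gamma \subset \gamma$. Hence the two ranges coincide $\Theta$-a.s. Since for $\kappa \leq 4$ both traces are simple arcs and share the endpoints $\{x,y\}$ with opposite orientations, the common range forces $\hat\gamma$ to be the time-reversal of $\gamma$ up to reparametrization, which is precisely reversibility.

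The only delicate point is the non-boundary-hitting claim applied in the slit domain $D \setminus K_\tau$: one must check that the Rohde--Schramm statements \cite{RS01} transfer under the conformal map from $D \setminus K_\tau$ to $\H$, but this is standard since conformal image of boundary is boundary, and the simple-curve / non-hitting conclusions are topological invariants of the trace. Everything else is extraction from the maximal coupling and a straightforward density argument.
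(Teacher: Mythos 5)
Your proof is correct and follows essentially the same route as the paper: apply Theorem \ref{maxcoupl} to the commuting pair of chordal $\SLE_\kappa$'s aimed at each other, use the non-boundary-hitting property for $\kappa\leq 4$ to identify $\hat\gamma_{\hat\tau}$ with $\gamma_\tau$ for a countable dense family of stopping times, and conclude equality of the ranges and hence reversibility. The only (harmless) divergence is at the reverse inclusion, where you invoke the symmetric half of Theorem \ref{maxcoupl}, whereas the paper deduces $\hat\gamma\subset\gamma$ purely topologically from $\gamma\subset\hat\gamma$ (a connected closed subset of a simple arc containing both of its endpoints must be the whole arc); both arguments are valid.
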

\begin{proof}
Let $(D,x,y)$ be a configuration, $\mu_c$ the chordal $\SLE$ measure from $x$ to $y$, $\hat\mu_c$ the chordal $\SLE$ measure from $y$ to $x$. They satisfy local commutation, hence there exists a maximal coupling $\Theta$.

Take a countable dense sequence of ${\mc F}$-stopping times $(\tau^m)$ (e.g., capacity of the hull reaches a rational number); denote simply by $\tau$ an element in this sequence. Then in the maximal coupling $\Theta$, the conditional distribution of $\hat\gamma^{\hat\tau}$ is $\SLE$ in $(D\setminus K_\tau,y,\gamma_\tau)$ stopped upon hitting $K_\tau$. For $\kappa\leq 4$, the $\SLE$ trace intersects the boundary only at its endpoints. Hence $\hat\gamma_{\hat\tau}=\gamma_\tau$. This proves that under $\Theta$, the intersection $\gamma\cap\hat\gamma$ is a.s. dense in $\gamma$. Since both $\gamma,\hat\gamma$ are a.s. closed, $\gamma\subset\hat\gamma$; since $\hat\gamma$ is simple, removing a point disconnects it, but $\gamma$ is connected. Hence the occupied sets of $\gamma,\hat\gamma$ are equal. Again, as the paths are simple, the occupied set determines the parameterized trace. Hence in any maximal coupling $\Theta$, $\hat\gamma=\gamma^r$  (the reverse trace) a.s.; this determines the coupling uniquely.
\end{proof}

Besides local commutation, the argument uses only qualitative properties of the paths. So we can phrase at no additional cost:

\begin{Cor}
Let $\kappa\leq 4$, $\mu_c$, $\hat\mu_c$ a system of commuting SLE in the configuration $c=(D,z_0,z_1,\dots,z_n,z_{n+1})$. Assume that $\mu_c$, $\hat\mu_c$ are supported on simple paths that meet the boundary of $D$ only at $z_0,z_{n+1}$. Then $\gamma^r$ and $\hat\gamma$ are identical in distribution. 
\end{Cor}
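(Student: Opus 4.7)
The strategy is to reproduce the proof of the preceding theorem essentially verbatim, substituting the more general system of commuting $\SLE$'s for the reversibility pair. By the local commutation hypothesis, Theorem \ref{maxcoupl} furnishes a maximal coupling $\Theta$ of $\mu_c$ and $\hat\mu_c$. I aim to show that under $\Theta$, $\hat\gamma=\gamma^r$ almost surely, so that taking marginals yields the desired identity in distribution.

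Fix a countable dense collection of ${\mc F}$-stopping times $(\tau^m)$ bounded above by $\sigma$ (say, first times the half-plane capacity of the hull reaches a rational value, cut off just before hitting $\partial$). Write $\tau=\tau^m$ and let $\hat\tau=\sup\{t\geq 0:\hat K_t\cap(K_\tau\cup\hat\partial)=\varnothing\}$ as in Theorem \ref{maxcoupl}. Under $\Theta$, conditionally on $\gamma^\tau$ the law of $\hat\gamma^{\hat\tau}$ is $\hat\mu^{\hat\tau}_{c_{\tau,0}}$ in the configuration $c_{\tau,0}=(D\setminus K_\tau,\gamma_\tau,z_1,\dots,z_n,z_{n+1})$. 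The simple-path, boundary-endpoint-only hypothesis transfers via the $\SLE$ Markov property to $\hat\mu_{c_{\tau,0}}$: its trace is a simple curve in $\overline{D\setminus K_\tau}$ from $z_{n+1}$ to $\gamma_\tau$ meeting $\partial(D\setminus K_\tau)=\partial D\cup K_\tau$ only at its two endpoints. Since $z_{n+1}\notin\hat\partial$ and the trace avoids $\partial D\setminus\{z_{n+1}\}$ until its final point, it meets $K_\tau\cup\hat\partial$ only at $\gamma_\tau$. Hence $\hat\gamma_{\hat\tau}=\gamma_\tau$ almost surely under $\Theta$.

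This forces $\gamma_{\tau^m}\in\hat\gamma$ a.s.\ simultaneously for every $m$. By continuity of $\gamma$ and density of $(\tau^m)$, $\{\gamma_{\tau^m}\}$ is dense in $\gamma$; since $\hat\gamma$ is closed, $\gamma\subset\hat\gamma$ almost surely. Applying the symmetric statement of Theorem \ref{maxcoupl} to the same coupling $\Theta$ (with the roles of $\gamma,\hat\gamma$ interchanged) yields in the same way $\hat\gamma\subset\gamma$ a.s., so the ranges coincide. As both traces are simple curves between $z_0$ and $z_{n+1}$ traversed in opposite orientations, equality of ranges determines them up to reversal of parameterization, giving $\hat\gamma=\gamma^r$ under $\Theta$; marginalization completes the argument.

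The only point requiring mild care is the step that forces $\hat\gamma_{\hat\tau}=\gamma_\tau$: one must verify that the qualitative simple-path hypothesis genuinely passes from $\hat\mu_c$ to the conditional continuation $\hat\mu_{c_{\tau,0}}$, and that the stopping arc $\hat\partial$ is not hit prematurely. Both are immediate from the $\SLE$ Markov property and the disjointness of the endpoints from $\hat\partial$, so there is no serious obstacle beyond that already handled in the reversibility theorem.
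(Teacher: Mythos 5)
Your proposal is correct and follows essentially the paper's own route: the paper gives no separate proof of this corollary, noting only that the reversibility argument uses nothing beyond local commutation and the qualitative path properties, and your write-up is exactly that argument transplanted. The only (immaterial) variation is that you obtain the reverse inclusion $\hat\gamma\subset\gamma$ from the symmetric half of Theorem \ref{maxcoupl}, whereas the paper deduces it topologically (a closed connected subset of a simple arc containing both endpoints is the whole arc); both work.
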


A simple example of the situation is as follows: let $(z_1,\dots,z_4)$ be four marked points on the arc $(z_0z_5)$. Then we can consider chordal $\SLE$ from $z_1$ to $z_5$ weighted by any, say, bounded above and below function of the cross-ratio of $(z_1,\dots,z_4)$ in $D\setminus\gamma$. This plainly preserves both local commutation and reversibility. 

Another setup where the corollary applies is the following: $c=(D,z_0,z_1,\dots,z_{n+1})$ with points in counterclockwise order. Let $\rho_1,\dots,\rho_n$ be such that $\rho_1+\cdots+\rho_i\geq 0$ for $1\leq i<n$ and $\rho_1+\cdots+\rho_n=0$. Then the traces of $\SLE_4(\underline\rho,-2)$ starting from $z_0$ and $\SLE_4(-2,-\underline\rho)$ starting from $z_{n+1}$ are the reverse of each other in distribution. This describes the scaling limit of the zero level line of a discrete free field (\cite{SS_freefield}) with piecewise constant boundary conditions (with jump at $z_i$ proportional to $\rho_i$). A version with marked points on both sides of $z_0$ also holds.

One also obtains reversibility identities for the pairs of commuting $\SLE$'s (aiming at each other) with four marked points described at the end of Section 3.2. By degenerating two points into one, this describes the reversal of $\SLE_\kappa(\rho)$, $\kappa\leq 4$, $\rho\geq\frac\kappa 2-2$. For instance, if $\kappa=8/3$, one can represent an $\SLE_{8/3}(\rho)$ in $(\H,0,1,\infty)$ as the limit of a chordal $\SLE_{8/3}$ in $(\H,0,\infty)$ conditioned not to intersect a restriction measure with exponent $\nu=\nu(\rho)$ from $1$ to $z\gg 1$ (\cite{W2}; reversibility in this case follows from \cite{LSW3}). For general $\kappa$, it is unclear whether there is a simple probabilistic interpretation, but one still gets an exact (if unwieldy in general) description of the reversal. 

\begin{Cor}
Let $\kappa\leq 4$, $\rho\geq\frac\kappa 2-2$, $(D,x,y)$ a configuration. Then $\SLE_\kappa(\rho)$ in $(D,x,y,x^+)$ and in $(D,y,x,y^-)$ have the same occupied set in distribution, where $x,x^+,y^-,y$ are in this order on the boundary.
\end{Cor}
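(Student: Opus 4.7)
The plan is to apply the preceding corollary to the pair of locally commuting $\SLE_\kappa$'s with four marked points constructed at the end of Section 3.2, and then to pass to a degenerate limit in which two of the marked points merge with the endpoints of the $\SLE$'s.

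Concretely, I would fix a configuration $c=(D,x,z_1,z_2,y)$ with the four boundary points in this cyclic order, and the pair of commuting $\SLE_\kappa$'s driven by the partition function
$$Z(c)=H_D(x,y)^{\frac{6-\kappa}{2\kappa}}H_D(z_1,z_2)^\nu\psi(u),$$
where $\psi$ is the hypergeometric partition function of Section 3.2, chosen positive on the configuration space, and $u$ is the cross-ratio. For $\kappa\leq 4$, chordal $\SLE_\kappa$ is a.s.\ a simple path touching $\partial D$ only at its endpoints; the Radon--Nikodym description \eqref{RNgen} (extending Proposition \ref{Pdens}) then shows the same holds for each of the two commuting $\SLE$'s in the admissible range of $\nu$, since the relevant density is locally bounded away from $0$ and $\infty$ on compacts at positive distance from the marked points. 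The preceding corollary thus applies and yields the distributional identity $\gamma^r\stackrel{d}{=}\hat\gamma$ between the reverse trace of the first $\SLE$ and the second.

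The second step is the degeneration $z_1\to x^+$ and $z_2\to y^-$ along the boundary arc $(x,y)$ carrying the marked points. In this limit, I expect the first $\SLE$ to converge in law to $\SLE_\kappa(\rho)$ in $(D,x,y,x^+)$: the force $z_2\to y$ merges with the target $y$ and is absorbed into the chordal drift, while $z_1\to x^+$ becomes the single surviving force point, with $\rho$ read off from the asymptotics of $\psi$ at the relevant endpoint of the hypergeometric interval; the second $\SLE$ degenerates symmetrically to $\SLE_\kappa(\rho)$ in $(D,y,x,y^-)$. Weak convergence of the Loewner chains, combined with continuity of the occupied-set map on simple paths in the Carath\'eodory topology, then transfers the identity of occupied sets to the limit.

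The main obstacle is this degeneration step. One must verify tightness of the driving processes uniformly as $z_1,z_2\to x^+,y^-$; identify the limiting SDE as that of $\SLE_\kappa(\rho)$ with the claimed parameter (the correspondence $\nu\mapsto\rho$ is obtained by writing the SDE for the driving process under the Girsanov transform induced by $Z$ and then passing to the limit, using the standard asymptotics of $\vphantom{F}_2F_1$ at $0$); and confirm that the approximating commuting pair remains a.s.\ boundary-avoiding throughout the family, so that the preceding corollary applies uniformly along the sequence. The threshold $\rho\geq\frac\kappa2-2$ is precisely the range for which this boundary-avoidance holds both for the approximating $\SLE$'s and for the limit $\SLE_\kappa(\rho)$, which is what makes the occupied-set identity well-posed in the limit.
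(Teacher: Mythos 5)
Your overall strategy is the one the paper follows: take the four-marked-point commuting pair of Section 3.2 with partition function $H_D(x,y)^{\frac{6-\kappa}{2\kappa}}H_D(z_1,z_2)^\nu\psi(u)$, apply the preceding reversibility corollary to get identity of occupied sets, and then degenerate $z_1\to x^+$, $z_2\to y^-$. The only presentational difference is in the degeneration step: the paper expands the explicit partition-function martingale to leading order as $z_2=y-\eps\to y$ and recognizes the limit via Lemma \ref{rhodens} as the density of $\SLE_\kappa(\rho)$, $\rho=\kappa\beta$ with $\frac\kappa2\beta(\beta-1)+2\beta=2\nu$, against chordal $\SLE_\kappa$; your route through the SDE for the driving process and the hypergeometric asymptotics at $u=0$ is an equivalent bookkeeping of the same computation, and working at the level of densities is somewhat cleaner than controlling tightness of driving processes.

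The one step that does not hold as you state it is the boundary-avoidance of the commuting pair. Local absolute continuity with respect to chordal $\SLE_\kappa$, with density bounded above and below, is only available on compacts at positive distance from the marked points $z_1,z_2$ (this is built into Proposition \ref{Pdens} and Lemma \ref{RNrho}); it therefore gives simplicity and boundary-avoidance of the trace away from $[z_1,z_2]$, but says nothing about whether the trace accumulates on $[z_1,z_2]$. That is exactly the delicate point, and the claim genuinely fails for part of the ``admissible range of $\nu$'': for $\rho=\kappa\beta<\frac\kappa2-2$ the trace does hit the boundary near the force point even when $\kappa\leq4$. What the paper invokes here is a comparison (Bessel/scale-function) argument of the type used for Lemma \ref{rhohit}, and this is precisely where the hypothesis $\rho\geq\frac\kappa2-2$ enters. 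You flag the threshold at the end, but the mechanism you offer for it (bounded Radon--Nikodym derivatives) is not the right one; the Bessel comparison near $z_1$ and $z_2$ needs to be supplied, both for the approximating pair and uniformly along the degeneration.
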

\begin{proof}
We sketch the argument. The result follows from reversibility in the regular situation with four marked points described at the end of Section 3.2. Indeed, if $x,z_1,z_2,y$ are in this order on the boundary, one has a pair of commuting $\SLE$'s starting at $x,y$ with common partition function: $Z(c)=H_D(x,y)^{\frac {6-\kappa}{2\kappa}}H_D(z_1,z_2)^\nu\psi(u)$
where $u$ is the cross-ratio $u=\frac{(z_1-x)(z_2-y)}{(y-x)(z_2-z_1)}$ (in the upper half-plane), $\frac\kappa 2\beta(\beta-1)+2\beta=2\nu$ and 
$$\psi(u)=(u(1-u))^\beta\vphantom{F}_2F_1(2\beta,2\beta+\frac 8\kappa-1;2\beta+\frac 4\kappa;u).$$
When $\kappa\leq 4$, $\rho=\kappa\beta\geq\frac\kappa 2-2$, the processes do not hit $[z_1,z_2]$, by comparison arguments. Thus the local commutation extends to a maximal coupling, and in this coupling the occupied sets coincide. 

The first $\SLE$ is the martingale transform of chordal $\SLE_\kappa$ in $(D,x,y)$ by the martingale (in upper half-plane coordinates):
$$t\longmapsto \left(\frac{g'_t(z_1)g'_t(z_2)}{(g_t(z_1)-g_t(z_2))^2}\right)^\nu\psi(u_t)$$
where $u_t$ is the cross-ratio at time $t$. Take $z_2=y-\eps$. Then the leading term of the expansion of the martingale as $\eps\searrow 0$ is:
$$t\longmapsto \left(\frac{g'_t(z_1)g'_t(y)}{(g_t(z_1)-g_t(y))^2}\right)^\nu\left(\frac{(g_t(z_1)-X_t)g'_t(y)}{(g_t(y)-X_t)(g_t(y)-g_t(z_1))}\right)^\beta$$
so that this limiting process is identified from Lemma \ref{rhodens} as $\SLE_\kappa(\rho)$ in $(D,x,y,z_1)$, $\rho=\kappa\beta$. A symmetric result holds for the other $\SLE$.
\end{proof}

The same arguments can be used to establish reversibility of systems of multiple $\SLE$'s considered in \cite{Dub7} (this also follows from the symmetry of the density of the system w.r.t. independent chordal $\SLE$'s when the pairing of endpoints is fixed).

\subsection{Duality}

The question of $\SLE$ duality is to describe boundaries of $\SLE_\kappa$, $\kappa>4$, in terms of $\SLE_{\hat\kappa}$, $\hat\kappa=16/\kappa$. 

There are various parametric situations we can consider. Let us start with the simplest setting: a configuration $c=D(x,z_1,y,z_2)$ has four marked points $x,y,z_1,z_2$ on the boundary. We consider two $\SLE$'s (inducing the measures $\mu_c$, $\hat\mu_c$, with traces $\gamma$, $\hat\gamma$), see Table 1 ($[\kappa]$ represents an $\SLE_\kappa$ ``seed", the other entries are the $\rho$ parameters). 
\begin{table}[htdp]
\caption{}
\begin{center}
\begin{tabular}{|c|c|c|c|}
\hline
$x$&$z_1$&$y$&$z_2$\\
\hline
$[\kappa]$&$\rho_1$&$-\frac\kappa 2$&$\rho_2$\\
\hline
$-\frac{\hat\kappa}2$&$\hat\rho_1$&$[\hat\kappa]$&$\hat\rho_2$\\
\hline
\end{tabular}
\end{center}
\label{default}
\end{table}%

The additional conditions for local commutation are $\rho_1+\rho_2=\frac 32(\kappa-4)$, $\hat\rho_i=-\frac 4\kappa\rho_i$, consequently $\hat\rho_1+\hat\rho_2=\frac 32(\hat\kappa-4)$. This leaves one free parameter, say $\rho_1=\rho$. We need to put conditions on $\rho$ so that paths have a correct geometry. Take $\rho\in [\frac{\kappa-4}2,\kappa-4]$, a nonempty interval when $\kappa>4$. Consequently, $\rho_2\in [\frac{\kappa-4}2,\kappa-4]$, $\hat\rho_1,\hat\rho_2\in [\frac{\hat\kappa-4}2,\hat\kappa-4]$. Then the first $\SLE$ will first intersect $[z_1,z_2]$ at $y$ (see Lemma \ref{rhohit}). To restrict even more the situation, take $\hat\rho_1=\hat\kappa-4$ (or symmetrically $\hat\rho_2=\hat\kappa-4$). Then the second $\SLE$ cannot hit $(z_2,x)$, nor $(z_1,z_2)$ except at $y$; and it hits $(x,z_1)$ since $\hat\rho_1=\hat\kappa-4<\frac{\hat\kappa}2-2$.

\begin{Prop}\label{Pdual}
In a maximal coupling $\Theta$ of $\mu_c$, $\hat\mu_c$, the range of $\hat\gamma$ is contained in that of $\gamma$. If $\rho_1=\kappa-4$, $\hat\gamma$ is the right boundary of $K$; if $\rho_2=\kappa-4$, $\hat\gamma$ is the left boundary of $K$.
\end{Prop}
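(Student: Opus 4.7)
The plan is to apply the maximal coupling theorem (Theorem~\ref{maxcoupl}) to the pair $(\mu_c,\hat\mu_c)$ and use the geometric \emph{a priori} information on the two traces to identify $\hat\gamma$ with a boundary component of $K$. Local commutation holds by the explicit relations $\rho_1+\rho_2=\tfrac32(\kappa-4)$, $\hat\rho_i=-\tfrac4\kappa\rho_i$ noted just before the proposition, so Theorem~\ref{class} and Theorem~\ref{maxcoupl} both apply. Since $\hat\kappa=16/\kappa<4$, $\hat\gamma$ is a.s.\ a simple curve. By the standard hitting criterion (Lemma~\ref{rhohit}) applied to our parameters, $\gamma$ first meets $[z_1,z_2]$ at $y$, so the stopping time $\sigma$ from Theorem~\ref{maxcoupl} equals this hitting time; dually, $\hat\gamma$ avoids $(z_2,x)$ and avoids $(z_1,z_2)\setminus\{y\}$, while $\hat\rho_1=\hat\kappa-4<\tfrac{\hat\kappa}2-2$ forces $\hat\gamma$ to terminate on $(x,z_1)$.

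For the range inclusion $\hat\gamma\subset\gamma$, fix a maximal coupling $\Theta$ from Theorem~\ref{maxcoupl} and a countable dense family of $\mc F$-stopping times $\tau<\sigma$. Conditionally on $\gamma^\tau$ under $\Theta$, the stopped trace $\hat\gamma^{\hat\tau(\tau)}$ with $\hat\tau(\tau)=\sup\{t:\hat K_t\cap(K_\tau\cup\hat\partial)=\varnothing\}$ follows the appropriate variant $\SLE$ in $c_{\tau,0}$ run until first exiting $D\setminus K_\tau$. In particular the hitting point $p_\tau=\hat\gamma_{\hat\tau(\tau)}$ lies on $\partial K_\tau\cap D$, which is a subset of the trace $\gamma^\tau$ since $\hat\gamma$ approaches $K_\tau$ from its exterior in the simply connected complement. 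As $\tau$ increases continuously from $0$ to $\sigma$, $K_\tau$ grows monotonically up to $K_\sigma$, and $\hat\tau(\tau)$ decreases monotonically, ranging from values close to $\hat\sigma$ (when $\tau$ is small and $K_\tau$ is a tiny neighborhood of $x$) down to $0$ (as $\tau\nearrow\sigma$ and $\gamma_\tau\to y$). By continuity of $t\mapsto\hat\gamma_t$ and the simplicity of $\hat\gamma$, the collection $\{p_\tau\}$ sweeps out a dense subset of $\hat\gamma$, every point of which lies on $\gamma$; hence $\hat\gamma\subset\gamma$ as closed sets.

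For the identification with the right (resp.\ left) boundary of $K$, one applies the symmetric side of the coupling: for any $\hat\tau<\hat\sigma$, $\gamma$ stopped at the corresponding $\tau(\hat\tau)=\sup\{s:K_s\cap(\hat K_{\hat\tau}\cup\partial)=\varnothing\}$ stays in $\overline{D\setminus\hat K_{\hat\tau}}$, so $\gamma$ and $\hat\gamma$ do not cross. Combined with $\hat\gamma\subset\gamma$, the simple arc $\hat\gamma$ runs from $y$ to its endpoint in $(x,z_1)$ along a single component of $\partial K_\sigma\cap D$. The choice $\rho_1=\kappa-4$, equivalently $\hat\rho_1=\hat\kappa-4$, forces the terminal point to lie in $(x,z_1)$, so $\hat\gamma$ bounds $K_\sigma$ from the complementary component containing $z_2$ and $\infty$, i.e.\ $\hat\gamma$ is the right boundary of $K$ in the convention of the proposition; the case $\rho_2=\kappa-4$ is obtained by reflection of the entire configuration.

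The main obstacle is the density argument in the second paragraph: one must justify that as $\tau$ ranges over a dense set of stopping times in $[0,\sigma)$, the hitting points $p_\tau$ genuinely sweep out all of $\hat\gamma$, not merely a discrete skeleton. This reduces to continuity and monotonicity of $\tau\mapsto\hat\tau(\tau)$ and $\tau\mapsto p_\tau$ together with the fact that $\hat\tau(\tau)$ attains values arbitrarily close to both $0$ and $\hat\sigma$; jumps of $\partial K_\tau$ at swallowing times of the first $\SLE$ do not obstruct this because swallowed components are interior to $K$ and hence are avoided by the outer boundary along which $\hat\gamma$ must run. A secondary technicality is that the left/right identification in the last step relies on the topology of a Jordan curve separation, which one must verify using the explicit endpoints and the no-crossing statement from the symmetric coupling.
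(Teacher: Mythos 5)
Your overall strategy (maximal coupling plus the hitting information of Lemma \ref{rhohit}) is the right one, but the central step --- the inclusion of the range of $\hat\gamma$ in that of $\gamma$ --- is run in the wrong direction, and this is where the gap you yourself flag as ``the main obstacle'' is genuine. You condition on $\gamma^\tau$ and collect the first hitting points $p_\tau=\hat\gamma_{\hat\tau(\tau)}$ of $\partial K_\tau$; to conclude you then need $\{p_\tau\}$ to be dense in $\hat\gamma$, which requires $\tau\mapsto\hat\tau(\tau)$ to take a dense set of values in $[0,\hat\sigma]$. This fails in general: when $\gamma$ swallows a region, $K_\tau$ jumps and $\hat\tau(\tau)$ jumps downward, so an entire arc of $\hat\gamma$ corresponding to the skipped interval of $\hat\tau$-values is never a first hitting point of any $K_\tau$. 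Your remark that swallowed components are ``interior to $K$'' does not repair this, because the argument only ever certifies the single point $p_\tau$ per stopping time, not the arc of $\hat\gamma$ between consecutive values of $\hat\tau(\tau)$. The paper avoids the problem entirely by conditioning the other way: for a countable dense family of $\hat{\mc F}$-stopping times $\hat\tau$, the conditional law of $\gamma$ given $\hat\gamma^{\hat\tau}$ is the commuting $\SLE$ in $c_{0,\hat\tau}$, whose parameters (the weight $-\kappa/2$ now sitting at the tip $\hat\gamma_{\hat\tau}$) force it, by Lemma \ref{rhohit}, to accumulate at $\hat\gamma_{\hat\tau}$ and nowhere else on the relevant boundary arc; continuity of $\gamma$ away from $[z_1,z_2]$ then gives $\hat\gamma_{\hat\tau}\in\gamma$. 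Density of $\{\hat\gamma_{\hat\tau}\}$ in $\hat\gamma$ is now automatic from continuity of $\hat\gamma$, and closedness of $\gamma$ finishes the inclusion.

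Your identification of $\hat\gamma$ with the right boundary is also under-justified: a simple arc from $y$ to a point of $(x,z_1)$ that is contained in the range of $\gamma$ and does not cross it need not a priori coincide with the outer boundary of $K$, since the range of $\gamma$ is much larger than its boundary. The paper's argument pins this down by a pointwise ordering statement: for a generic stopping time $\tau$, the conditional law of $\hat\gamma$ given $\gamma^\tau$ first hits $K_\tau$ on the arc $[\gamma_\tau,z_1]$ (Lemma \ref{rhohit} again), and if $\gamma_\tau$ were strictly to the right of $\hat\gamma$, then $\hat\gamma$ would have to circle $\gamma_\tau$ and re-enter $K_\tau$, contradicting the preservation of ordering along the coupling. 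Hence every generic point of $\gamma$ is on $\hat\gamma$ or to its left, which is what actually forces $\hat\gamma$ to be the right boundary. You should incorporate an argument of this type rather than appealing to a Jordan-curve separation that is asserted but not verified.
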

\begin{proof}
As before, take $\hat\tau$ a stopping time for the second $\SLE$. The first $\SLE$ in $c_{0,\hat\tau}$ is defined until it exits at $\hat\gamma_{\hat\tau}$. More precisely, it is defined up to a time where it accumulates at $\hat\gamma_{\hat\tau}$ and at no other point of the boundary arc $[z_1,z_2]$ of $c_{0,\hat\tau}$ (Lemma \ref{rhohit}). But $\gamma$ is continuous away from $[z_1,z_2]$ in $c_0$; so if $\hat\tau$ is positive, $\gamma$ stopped when exiting $c_{0,\hat\tau}$ has a limit, which is $\hat\gamma_{\hat\tau}$. Hence $\hat\gamma_{\hat\tau}$ is on $\gamma$.
Taking countably many stopping times, this shows that $\hat\gamma$ is included in (the range of) $\gamma$. Moreover, ordering is preserved: $\hat\gamma_{\hat\tau}=\gamma_t$ for some $t\leq\tau$, and for any $t<\hat\tau$, $\hat\gamma_t\notin K_\tau$.

Set $\rho_1=\kappa-4$. Then the range of $\gamma$ is partitioned in points on $\hat\gamma$, to its left, or to its right. Take a stopping time $\tau$. Then $\hat\gamma$ first hits $K_\tau$ on the arc $[\gamma_\tau,z_1]$. If $\gamma_\tau$ was to the right of $\hat\gamma$, then $\hat\gamma$ would have to circle $\gamma_\tau$ and reenter in $K_\tau$, which would violate the ordering condition. Hence a generic point $\gamma_\tau$ is on $\hat\gamma$ or to its left. This implies that $\hat\gamma$ is contained in the right boundary of the range of $\gamma$, which is a simple path. Since $\hat\gamma$ starts at $y$ (where $\gamma$ ends) and ends on $(x,z_1)$, this shows that $\hat\gamma$ is the right boundary of the range of $\gamma$. 
\end{proof}

\begin{Rem}
The situation where $\rho$ varies in $[\frac{\kappa-4}2,\kappa-4]$ is of some independent interest and seems related to pivotal points questions.
\end{Rem}

We consider now versions where the non simple $\SLE$ is actually chordal $\SLE_\kappa$, at the expense of some complication for the dual simple $\SLE_{\hat\kappa}$.

\begin{proof}[Proof of Theorem \ref{dual4}.]
 
 Assume that $\kappa\in (4,8)$. Consider chordal $\SLE_\kappa$, say in $(\H,0,\infty)$. The point 1 is swallowed at time $\tau_1$; $D=\gamma_{\tau_1}$ is on $(1,\infty)$ with distribution given by:
$$\P(D\in(1,z))=F(z)=c\int_1^z u^{-\frac 4\kappa}(u-1)^{\frac 8\kappa-2}du$$
where $c=B(1-4/\kappa,8/\kappa-1)^{-1}$. In other words, $D^{-1}$ has a $\Beta(1-4/\kappa,8/\kappa-1)$ distribution. 
The function $F$ is such that $t\mapsto F((g_t(z)-W_t)/(g_t(1)-W_t))$ is a martingale. Let us disintegrate the $\SLE$ measure w.r.t. $D$ (see \cite{Dub4} for related questions). It is easy to see that up to $\tau_1$, the $\SLE$ conditional on $D\in dz$ is the martingale transform of chordal $\SLE$ by:
$$t\mapsto \partial_z F\left(\frac{g_t(z)-W_t}{g_t(1)-W_t}\right)=c\frac{g'_t(z)}{g_t(1)-W_t}(g_t(z)-W_t)^{-\frac 4\kappa}(g_t(z)-g_t(1))^{\frac 8\kappa-2}(g_t(1)-W_t)^{2-\frac 4\kappa}$$
and this is readily identified with $\SLE_\kappa(\kappa-4,-4)$ in $(\H,0,\infty,1,z)$ (Lemma \ref{rhodens}). To get a regular situation, we split the point $z$ into two points $y$ and $z_2$, while setting $x=0$, $z_1=1$, $z_3=\infty$. Consider the system of commuting $\SLE$'s given by Table 2.
\begin{table}[htdp]
\caption{$4<\kappa<8$}
\begin{center}
\begin{tabular}{|c|c|c|c|c|}
\hline
$x$&$z_1$&$y$&$z_2$&$z_3$\\
\hline
$[\kappa]$&$\kappa-4$&$-\frac\kappa 2$&$\frac\kappa 2-4$&$2$\\
\hline
$-\frac{\hat\kappa}2$&$\hat\kappa-4$&$[\hat\kappa]$&$\hat\kappa-2$&$-\frac{\hat\kappa}2$\\
\hline
\end{tabular}
\end{center}
\label{default}
\end{table}%

The first $\SLE$ hits $[z_1,z_3]$ at $y$, while the second $\SLE$ will not hit $[z_2,z_3]$ and exits $[x,z_1]$ somewhere in $(x,z_1)$ (Lemma \ref{rhohit}). Arguing as in Proposition \ref{Pdual}, this shows that $\hat\gamma$ is the right boundary of $K$. Finally, one takes $z_2\searrow y$, so that the first $\SLE_\kappa$ becomes chordal $\SLE_\kappa$ conditional on $D=y$. This yields Theorem \ref{dual4}.
\end{proof}

When $\kappa\geq 8$, the trace is a.s. space filling, and we have to proceed differently to isolate a boundary arc.

\begin{proof}[Proof of Theorem \ref{dual8}.]
Consider now the case of a chordal $\SLE_\kappa$ in $(\H,0,\infty)$, $\kappa\geq 8$ (thus $\hat\kappa\leq 2$). Then $\gamma_{\tau_1}=1$ a.s. There is a leftmost point $G$ on $(\infty,0)$ visited by the trace before $\tau_1$. We are interested in the boundary of $K_{\tau_G}$, a simple curve from $G$ to some point in $(0,1)$. Then the distribution of $G$ is given by:
$$\P(G\in (z,0))=c\int_z^0 (-u)^{-4/\kappa}(1-u)^{\frac 8\kappa-2}du$$
where $c=B(1-4/\kappa,1-4/\kappa)$
In other words, $G$ is such that $G/(G-1)$ has a $\Beta(1-4/\kappa,1-4/\kappa)$ distribution (generalised arcsine distribution).
The disintegrated $\SLE$ measure w.r.t. $G$ is again $\SLE_\kappa(-4,\kappa-4)$ in $(\H,0,\infty,G,1)$, up to hitting $G$. To get a regular situation, we need to split the point $G$ into three points $z_1,y,z_2$; we also set $x=0$, $z_3=1$, $z_4=\infty$. Consider the system of two commuting $\SLE$'s in $(\H,y,z_1,x,z_2,z_3)$ given by Table 3.
\begin{table}[htdp]
\caption{$\kappa\geq 8$}
\begin{center}
\begin{tabular}{|c|c|c|c|c|c|}
\hline
$z_1$&$y$&$z_2$&$x$&$z_3$&$z_4$\\
\hline
$-2$&$-\frac\kappa 2$&$\frac\kappa 2-2$&$[\kappa]$&$\kappa-4$&$2$\\
\hline
$\frac{\hat\kappa}2$&$[\hat\kappa]$&$\frac{\hat\kappa}2-2$&$-\frac{\hat\kappa}2$&$\hat\kappa-4$&$-\frac{\hat\kappa}2$\\
\hline
\end{tabular}
\end{center}
\end{table}%

The first $\SLE$ exits at $y$, the second one exits in $(x,z_3)$ (Lemma \ref{rhohit}). As in Proposition \ref{Pdual}, this shows that one can couple the two $\SLE$'s such that the second one is the boundary arc of the first one between $y$ and a point of $(x,z_3)$. Taking $z_1\nearrow y$, $z_2\searrow y$ gives Theorem \ref{dual8}.
\end{proof}

At the expense of some complications, one can consider more symmetric situations. Let $(D,x,y,z,z',y',x')$ be a configurations (points are in that order). There is a system of four commuting $\SLE$'s attached to this configuration (where $a+b=2$), see Table 4.
\begin{table}[htdp]
\caption{}
\begin{center}
\begin{tabular}{|c|c|c|c|c|c|}
\hline
$x$&$y$&$z$&$z'$&$y'$&$x'$\\
\hline
$[\kappa]$&$a(\kappa-4)$&$-\frac\kappa 2$&$-\frac\kappa 2$&$b(\kappa-4)$&$2$\\
\hline
$2$&$a(\kappa-4)$&$-\frac\kappa 2$&$-\frac\kappa 2$&$b(\kappa-4)$&$[\kappa]$\\
\hline
$-\frac{\hat\kappa}2$&$a(\hat\kappa-4)$&$[\hat\kappa]$&$2$&$b(\hat\kappa-4)$&$-\frac{\hat\kappa}2$\\
\hline
$-\frac{\hat\kappa}2$&$a(\hat\kappa-4)$&$2$&$[\hat\kappa]$&$b(\hat\kappa-4)$&$-\frac{\hat\kappa}2$\\
\hline
\end{tabular}
\end{center}
\end{table}%

\section{Some technical results}

\subsection{Absolute continuity for variants of $\SLE$}

In this subsection, we phrase similar absolute continuity results for different versions of $\SLE$. In the context of duality, it is useful to consider $\SLE$-type measures in a parametric family $\SLE_\kappa(\underline\rho)$ (\cite{LSW3,Dub4}), as acknowledged in \cite{Dub4}.

An $\SLE_\kappa(\underline\rho)$, $\underline\rho=\rho_1,\dots,\rho_n$, in the configuration $(\H,x,\infty,z_1,\dots,z_n)$ is an $\SLE$ the driving process of which satisfies the SDE:
$$dW_t=\sqrt\kappa dB_t+\sum_{i=1}^n\frac{\rho_i}{W_t-g_t(z_i)}dt$$
and $W_0=x$, up to swallowing of a $z_i$. See Lemma 3.2 of \cite{Dub6} for homographic change of coordinates. In particular, if $\sum_i\rho_i=\kappa-6$, the point at infinity is used for normalization only.

The following lemma is a change of measure result 
(see also \cite{W2}). 

\begin{Lem}\label{rhodens}
Consider an $\SLE_\kappa$ starting from $x$ in $\H$, 
$\underline\rho=\rho_1,\dots,\rho_n$; let $Z^i_t=g_t(z_i)-W_t$. Then:
$$M_t=\prod_i g'_t(z_i)^{\alpha_i}|Z^i_t|^{\beta_i}\prod_{i<j}|Z^j_t-Z^i_t|^{\eta_{ij}}$$
is a local martingale if $2\alpha_i=\frac\kappa 2\beta_i(\beta_i-1)+2\beta_i$, $2\eta_{ij}=\kappa\beta_i\beta_j$.
Before the swallowing of any marked point, $M_t/M_0$ is the density of an $\SLE_\kappa(\underline\rho)$ starting from $(x,z_1,\dots z_n)$ w.r.t. $\SLE_\kappa$, where $\underline\rho=\kappa\beta_1,\dots,\kappa\beta_n$. 
\end{Lem}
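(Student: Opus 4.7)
The plan is a standard Itô/Girsanov argument. First I would establish that $M_t$ is a local martingale by computing $dM_t/M_t$ via Itô's formula applied to $\log M_t$. The relevant SDEs are $dW_t = \sqrt\kappa\,dB_t$, $dg_t(z_i) = (2/Z^i_t)\,dt$ (hence $dZ^i_t = (2/Z^i_t)\,dt - \sqrt\kappa\,dB_t$, with $d\langle Z^i\rangle_t = \kappa\,dt$), and $dg'_t(z_i) = -2g'_t(z_i)/(Z^i_t)^2\,dt$; crucially, $d(Z^j_t - Z^i_t)$ has vanishing Brownian part, so $Z^j - Z^i$ is of bounded variation. Differentiating $\log M_t$ term by term yields
\begin{align*}
d(\alpha_i \log g'_t(z_i)) &= -\frac{2\alpha_i}{(Z^i_t)^2}\,dt,\\
d(\beta_i \log|Z^i_t|) &= \frac{(4-\kappa)\beta_i}{2(Z^i_t)^2}\,dt - \frac{\sqrt\kappa\,\beta_i}{Z^i_t}\,dB_t,\\
d(\eta_{ij} \log|Z^j_t - Z^i_t|) &= -\frac{2\eta_{ij}}{Z^i_t Z^j_t}\,dt.
\end{align*}
Adding the Itô correction $\tfrac{1}{2}d\langle \log M\rangle_t = \tfrac{\kappa}{2}\bigl(\sum_i \beta_i/Z^i_t\bigr)^2 dt$ and collecting the coefficients of $1/(Z^i_t)^2$ and $1/(Z^i_t Z^j_t)$ in the drift of $dM_t/M_t$, the vanishing conditions reduce exactly to $2\alpha_i = \tfrac{\kappa}{2}\beta_i(\beta_i-1) + 2\beta_i$ and $2\eta_{ij} = \kappa\beta_i\beta_j$, as stated.

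Next I would apply Girsanov's theorem to identify the tilted measure. The martingale part of $\log M_t$ is $-\sum_i (\sqrt\kappa\,\beta_i/Z^i_t)\,dB_t$, so under the change of measure with density $M_t/M_0$ the process $\tilde B_t = B_t + \sum_i \int_0^t (\sqrt\kappa\,\beta_i/Z^i_s)\,ds$ is a Brownian motion. Substituting into $dW_t = \sqrt\kappa\,dB_t$ gives
$$dW_t = \sqrt\kappa\,d\tilde B_t + \sum_i \frac{\kappa\beta_i}{W_t - g_t(z_i)}\,dt,$$
which is precisely the defining SDE of $\SLE_\kappa(\underline\rho)$ with $\rho_i = \kappa\beta_i$.

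The main technical subtlety is that $M_t$ is a priori only a local martingale, whereas Girsanov requires a genuine (uniformly integrable) martingale. I would resolve this by localisation: for each $\varepsilon > 0$, let $T_\varepsilon$ be the first time one of $|Z^i_t|$ or $|Z^j_t - Z^i_t|$ drops below $\varepsilon$, or $|g'_t(z_i)|$ leaves $[\varepsilon, \varepsilon^{-1}]$. On $[0, T_\varepsilon]$ the local martingale $M_t$ is bounded and hence a true martingale; Girsanov then identifies the law of $W$ on $[0, T_\varepsilon]$ with that of $\SLE_\kappa(\underline\rho)$ stopped at $T_\varepsilon$. Letting $\varepsilon \downarrow 0$, the times $T_\varepsilon$ increase to the first swallowing time of any $z_i$, yielding the claimed density identity up to that time.
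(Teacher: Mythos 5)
Your proposal is correct and follows essentially the same route as the paper: an It\^o computation showing the drift of $dM_t/M_t$ vanishes exactly under the stated relations on $\alpha_i,\eta_{ij}$, followed by Girsanov to read off the drift $\sum_i \kappa\beta_i/(W_t-g_t(z_i))$. Working with $\log M_t$ rather than $M_t$ directly is an equivalent bookkeeping choice, and your localisation remark only makes explicit what the paper handles by "stopped away from swallowing a $z_i$" (to make $M^{T_\varepsilon}$ literally bounded one should also cap $|Z^i_t|$ and the time horizon, but that is routine).
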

\begin{proof}
This is a standard computation relying on:
\begin{align*}
dZ^i_t=\frac 2{Z^i_t}dt-\sqrt\kappa dB_t&&
\frac{dg'_t(z_i)}{g'_t(z_i)}=-\frac 2{(Z^i_t)^2}dt&&
\frac{d(Z^j_t-Z^i_t)}{(Z^j_t-Z^i_t)}=-\frac{2}{Z^i_tZ^j_t}dt
\end{align*}

so that:
\begin{align*}
\frac{dM_t}{M_t}&=\sum_i\frac{\beta_i}{Z^i_t}\left(\frac 2{Z^i_t}dt-\sqrt\kappa dB_t\right)
+\frac{\kappa}2\frac{\beta_i(\beta_i-1)}{(Z^i_t)^2}dt-\frac{2\alpha_i}{(Z^i_t)^2}dt+\sum_{i<j}(\kappa\beta_i\beta_j-2\eta_{ij})\frac{dt}{Z^i_tZ^j_t}
\end{align*}
The statement on the density follows from the Girsanov theorem (e.g. \cite{RY}), observing that:
$$\frac{d\langle{M_t,W_t}\rangle}{M_t}=-\sum_i\frac{\kappa\beta_i}{Z^i_t}dt$$
that is, the drift term of an $\SLE_\kappa(\underline\rho)$ with $\rho_i=\kappa\beta_i$. More precisely, under the original measure, $W=\sqrt\kappa B$, $B$ a standard Brownian motion. Under the transformed measure (via the local martingale $M_t$ stopped away from swallowing a $z_i$), $\hat W=W-\langle W,M\rangle /M$ is a (local) martingale with the same quadratic variation as $W$; ie, from L\'evy's theorem, a Brownian motion $\sqrt\kappa \hat B$. Hence $W=\sqrt\kappa\hat B+\langle W,M\rangle /M$.
\end{proof}

Let $c=(D,z_0,\dots,z_n)$ be a configuration. As in Theorem \ref{class}, we consider a variant of $\SLE$ of the following type:
let $\psi$ be a positive, continuous, conformally invariant function on the configuration space and exponents $\nu_{ij}$ such that if
$$Z(c)=\psi(c)\prod_{0\leq i<j\leq n+1} H_D(z_i,z_j)^{\nu_{ij}}$$
then $\sum_{j=1}^{n+1}\nu_{0,j}=\alpha_\kappa$ and
$$M_s=H_D(z_0,z)^{-\alpha_\kappa}Z(c_{s,0})$$
is a local martingale for the reference measure $\SLE_\kappa(D,z_0,z)$, $z$ an auxiliary marked point on the boundary. For short, let us denote $\SLE_\kappa(Z)$ obtained by Girsanov transform of the reference $\SLE_\kappa(D,z_0,z)$ by $M$ (up to a disconnection event).

For example, from Lemma \ref{rhodens}, it is easy to see that $\SLE_\kappa(\underline\rho)$ in $c=(D,z_0,z_1,\dots,z_n)$, $\rho_1+\cdots+\rho_n=\kappa-6$, is $\SLE_\kappa(Z)$ with:
$$Z(c)=\prod_{i=1}^n H_D(z_0,z_i)^{-\frac{\rho_i}{2\kappa}}\prod_{1\leq i<j\leq n}H_D(z_i,z_j)^{-\frac{\rho_i\rho_j}{4\kappa}}.$$

The following is the analogue of Proposition \ref{Pdens}. 

\begin{Lem}\label{RNrho}
Let $c=(D,z_0,z_1,\dots,z_n)$ be a configuration consisting of a simply connected domain $D$ with $n+1$ marked points on the boundary; $c'=(D',z_0,z'_1,\dots,z'_n)$ is another configuration agreeing with $D$ in a neighbourhood $U$ of $z_0$; $U$ is at positive distance of marked points other than $z_0$. Let $\overline\mu_c^U$ denote the distribution of an $\SLE_\kappa(Z)$ in $c$, stopped upon exiting $U$. Then:
$$\frac{d\overline\mu^U_{c'}}{d\overline\mu^U_{c}}(\gamma)=\left(\frac{Z(c'_\tau)Z(c)}{Z(c_\tau)Z(c')}\right)\exp(-\lambda m(D;K_\tau,D\setminus D')+\lambda m(D';K_\tau,D'\setminus D))$$ 
where $c_\tau=(D\setminus K_\tau,\gamma_\tau,z_1,\dots,z_n)$, similarly for $c'_\tau$.
\end{Lem}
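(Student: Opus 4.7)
The plan is to deduce the identity from Proposition \ref{Pdens} by unwinding the Girsanov representation that defines $\overline\mu_c$. Pick auxiliary boundary points $z \in \partial D$ and $z' \in \partial D'$, both outside $U$ and at positive distance from $U$ and from the marked points; write $\tilde c = (D,z_0,\dots,z_n,z)$ and $\tilde c' = (D', z_0, z'_1, \dots, z'_n, z')$ for the augmented configurations, and let $\mu_{\tilde c}$, $\mu_{\tilde c'}$ denote the chordal $\SLE_\kappa$ reference measures in $(D,z_0,z)$ and $(D',z_0,z')$. Since $H_c(z_0,z)^{-\alpha_\kappa}$ is path-independent, the Girsanov construction of $\overline\mu_c$ yields
\[
\frac{d\overline\mu_c^U}{d\mu_{\tilde c}^U}(\gamma) \;=\; \frac{M_\tau}{M_0} \;=\; \frac{Z(\tilde c_\tau)}{Z(\tilde c)},
\]
and symmetrically for $c'$.

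Chain the densities and apply Proposition \ref{Pdens} to the chordal SLEs in $\tilde c$ and $\tilde c'$, which still agree in the neighbourhood $U$ of $z_0$:
\[
\frac{d\overline\mu_{c'}^U}{d\overline\mu_c^U}
\;=\; \frac{Z(\tilde c'_\tau)\, Z(\tilde c)}{Z(\tilde c_\tau)\, Z(\tilde c')}
\cdot \left(\frac{H(\tilde c'_\tau) H(\tilde c)}{H(\tilde c_\tau) H(\tilde c')}\right)^{\alpha_\kappa}
\exp\bigl(-\lambda m(D;K_\tau, D\setminus D') + \lambda m(D'; K_\tau, D'\setminus D)\bigr).
\]
The Brownian-loop exponential already matches the statement, so what remains is to show that the $Z$-ratio times the $H^{\alpha_\kappa}$-ratio collapses to the four-fold ratio of $Z$-values on the unaugmented configurations appearing in the conclusion.

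For this, expand $Z(\tilde c) = \psi(\tilde c)\prod_{0\le i<j\le n+1} H_D(z_i,z_j)^{\nu_{ij}}$ with $z_{n+1}=z$, and isolate the factors involving the auxiliary $z$. The defining relation $\sum_{j=1}^{n+1}\nu_{0,j}=\alpha_\kappa$ is calibrated precisely for this cancellation: the factor $H_D(z_0,z)^{\alpha_\kappa}$ from Proposition \ref{Pdens} combines with $H_D(z_0,z)^{\nu_{0,n+1}}$ inside $Z(\tilde c)$ so that the residual exponent on $H_D(z_0,z)$ is $-\sum_{j=1}^n\nu_{0,j}$, which together with the remaining $H_D(z_0,z_j)$-factors reproduces the intrinsic $Z$-ratios. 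The factors $H_D(z_i,z)^{\nu_{i,n+1}}$ for $i\geq 1$ appear identically in numerator and denominator of the four-term ratio (since $z_i$ is unaffected by $K_\tau$) and cancel; the same works with $z'$. The main technical obstacle is local-coordinate bookkeeping: following the inclusion-exclusion template of Proposition \ref{Pdens}, one fixes coherent local coordinates at $z_0$ and at all marked and auxiliary points so that every $H$-ratio is coordinate-independent, after which the cancellations are algebraic. Independence of the final expression from the choices of $z,z'$ is an automatic consistency check, and is exactly the property that makes $\overline\mu_c$ well-defined.
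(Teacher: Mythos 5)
Your overall route is the paper's: factor $\frac{d\overline\mu^U_{c'}}{d\overline\mu^U_{c}}$ through the chordal reference measures, identify the two outer factors as martingale ratios $M_\tau/M_0$ from the Girsanov definition of $\SLE_\kappa(Z)$, and apply Proposition \ref{Pdens} to the middle factor. The paper then simply observes that the normalizing factors $H_\cdot(\cdot,z)^{\pm\alpha}$ coming from the outer terms cancel exactly against the $H^\alpha$-ratio of Proposition \ref{Pdens}, leaving $\frac{Z(c'_\tau)Z(c)}{Z(c_\tau)Z(c')}$ times the loop-measure exponential (plus the remark, via Lemma \ref{Lbound}, that the stopped martingale is uniformly bounded, which you should also record to justify applying Girsanov at $\tau$).

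Where your write-up goes wrong is the final bookkeeping. You promote the auxiliary point $z$ to a genuine marked point of an augmented configuration $\tilde c$, so that $Z(\tilde c)$ acquires factors $H_D(z_i,z)^{\nu_{i,n+1}}$, and you dispose of these for $i\geq 1$ by asserting that they ``appear identically in numerator and denominator \dots since $z_i$ is unaffected by $K_\tau$.'' That assertion is false: $H_{D\setminus K_\tau}(z_i,z)$ differs from $H_D(z_i,z)$ because the domain changes even though the points do not, so if such factors with nonzero exponents were really present the four-fold ratio would not collapse to the intrinsic one. The reason the lemma nevertheless holds is that in this setup (unlike Theorem \ref{class}, where $z_{n+1}$ is the seed of the second $\SLE$) the auxiliary point carries no such exponents: $Z$ is a function of $(D,z_0,\dots,z_n)$ alone, and $z$ enters only through the single normalizing factor in $M_s=H_{D_s}(\gamma_s,z)^{-\alpha_\kappa}Z(c_{s,0})$, whose exponent $-\alpha_\kappa=-\sum_j\nu_{0,j}$ is exactly what makes $M$ independent of the local coordinate at the moving tip. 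With the spurious $H(z_i,z)$ factors removed, the outer terms contribute $\frac{Z(c_\tau)}{Z(c)}\cdot\frac{H_D(z_0,z)^{\alpha}}{H_{D_\tau}(\gamma_\tau,z)^{\alpha}}$ and its primed analogue, the $H^\alpha$ factors cancel against Proposition \ref{Pdens}, and the stated formula follows.
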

\begin{proof}

One can proceed as follows: let $\mu^U_c$ denotes chordal $\SLE_\kappa$ in the configuration $c$ (aiming at an auxiliary point $z$), stopped upon exiting $U$. Then trivially:
$$\frac{d\overline\mu^U_{c'}}{d\overline\mu^U_{c}}=\frac{d\overline\mu^U_{c'}}{d\mu^U_{c'}}\cdot\frac{d\mu^U_{c'}}{d\mu^U_{c}}\cdot\frac{d\mu^U_{c}}{d\overline\mu^U_{c}}$$
The middle term is studied in Proposition \ref{Pdens}, while the outer terms are, from the definition of $\SLE_\kappa(Z)$:
 $$\frac{d\mu^U_{c}}{d\overline\mu^U_{c}}=\frac{M_\tau}{M_0}=\frac{Z(c_{\tau})}{Z(c)}\cdot\frac{H_D(z_0,z)^\alpha}{H_{D_\tau}(\gamma_\tau,z)^\alpha}$$
 where $\tau$ is the first exit of $U$, and similarly for the other term. Under the assumptions above, $M^\tau$ is uniformly bounded (see Lemma \ref{Lbound}).
\end{proof}

Recall that $Z(c)$ depends on a choice of local coordinates at the marked points as a 1-form; but the ratio $\frac{Z(c'_\tau)Z(c)}{Z(c_\tau)Z(c')}$ does not depend on the choices.

\subsection{A bound on densities}

We give an upper bound on Radon-Nikodym derivatives that appear in the coupling argument. This is a rough estimate that is sufficient for our purposes.

A configuration $c=(D,x,y,z_1,\dots,z_n)$ consists in a bounded simply connected Jordan domain $D$, with distinct marked points on its  boundary; $\partial$ (resp. $\hat\partial$) is the smallest connected boundary arc containing all marked points except $x$ (resp. $y$); $K$ (resp. $\hat K$) is a chain growing at $x$ (resp. $y$) generated by the continuous trace $\gamma$ (resp. $\hat\gamma$). We denote $c_{s,t}=(D\setminus (K_s\cup \hat K_t),\gamma_s,z_1,\dots z_n,\hat\gamma_t)$; also $Z(c)=\psi(c)\prod_{i<j}H_D(z_i,z_j)^{\nu_{ij}}$, $\psi$ a positive, continuous, conformally invariant function. For $0\leq s'\leq s, 0\leq t'\leq t$, define:
$$\ell_{s',t'}^{s,t}=\left(\frac{Z(c_{s,t})Z(c_{s',t'})}{Z(c_{s',t})Z(c_{s,t'})}\right)\exp(-\lambda m(D\setminus (K_{s'}\cup\hat K_{t'});K_{s},\hat K_{t}))$$
\begin{Lem}\label{Lbound}
For any $\eta>0$ small enough, there exists $C=C(D,\eta)>0$ such that for all chains $K,\hat K$, $0\leq s'\leq s, 0\leq t'\leq t$ with $\dist(K_{s},\hat K_{t})\geq \eta$, $\dist(K_{s},\partial)\geq\eta$, $\dist(\hat K_{t},\hat\partial)\geq\eta$,
$$C^{-1}<\ell_{s',t'}^{s,t}<C$$
\end{Lem}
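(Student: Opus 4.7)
The approach is to reduce the general bound to the base case $(s',t')=(0,0)$ using the tower property, and then to bound each factor appearing in $\ell_{0,0}^{u,v}$ separately under the separation hypothesis.

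First, I apply the tower property \eqref{towergen} twice (first in the second index, then in the first) to obtain the identity
$$\ell_{s',t'}^{s,t}=\frac{\ell_{0,0}^{s,t}\,\ell_{0,0}^{s',t'}}{\ell_{0,0}^{s,t'}\,\ell_{0,0}^{s',t}}.$$
Each of the four pairs $(u,v)\in\{(s,t),(s',t),(s,t'),(s',t')\}$ inherits a separation of order $\eta$ from the hypothesis on $(s,t)$ (since shrinking a hull only increases distances). Hence it suffices to show that $\ell_{0,0}^{u,v}$ is uniformly bounded above and below in terms of a constant $C=C(D,\eta)$, for any $(u,v)$ satisfying the separation.

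Writing out $\ell_{0,0}^{u,v}$ yields three contributions: (i) the loop-measure exponential $\exp(-\lambda m(D;K_u,\hat K_v))$; (ii) the $\psi$-ratio $\psi(c_{u,v})\psi(c_{0,0})/(\psi(c_{u,0})\psi(c_{0,v}))$; (iii) the product over pairs $i<j$ of the Poisson kernel cross-ratios $(H_{D_{u,v}}H_D/(H_{D_{u,0}}H_{D_{0,v}}))^{\nu_{ij}}$. For (i), the classical bound on the Brownian loop measure gives $0\leq m(D;A,B)\leq C_1(\eta,\diam(D))$ whenever $\dist(A,B)\geq\eta$, which controls the exponential on both sides. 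For (ii), positivity and continuity of $\psi$, together with conformal invariance, reduce the bound to a compactness statement: the family of configurations with marked points at mutual (conformal) distance bounded below by a function of $\eta$ is precompact in the Carath\'eodory topology modulo conformal equivalence, so the four evaluations of $\psi$ are uniformly bounded away from $0$ and $\infty$.

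The main step is (iii). I first observe that the inclusion-exclusion structure of the exponents makes the product independent of the choice of local coordinates: at each marked point, the local coordinate appears with total exponent $\sum_j\nu_{ij}$ in each of the four configurations and cancels in the cross-ratio. For a pair $(z_i,z_j)$ of bulk points, the four domain modifications lie at distance $\geq\eta$ from $\{z_i,z_j\}$, so Harnack comparison gives uniformly bounded ratios $H_{D\setminus A}(z_i,z_j)/H_D(z_i,z_j)$ depending only on $(D,\eta)$. For pairs involving a tip (say $\gamma_u$), the cross-ratio collapses to a comparison of Poisson kernels with the same base points $(\gamma_u,z_j)$ in the domains $D_{u,0}$ and $D_{u,v}$, which differ only by the hull $\hat K_v$ at distance $\geq\eta$ from $\gamma_u$ and from $z_j$ by the separation hypothesis. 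The main obstacle is this last estimate, since $\gamma_u$ sits on the boundary of the rough Jordan domain $D_{u,0}$; it is handled by a boundary Harnack inequality, using the uniform separation to ensure that the added/removed boundary lies at positive distance from each base point of the kernel. Multiplying the uniform bounds over the finitely many pairs $(i,j)$ completes the proof.
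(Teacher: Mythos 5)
Your reduction via the tower property to the case $(s',t')=(0,0)$, and the bound on the loop-measure factor, coincide with the paper's. From there the two arguments genuinely diverge: the paper does not estimate the Poisson-kernel factors one by one. It passes to a reference Jordan domain, observes that the set $S$ of separated quadruplets $(K,x',\hat K,y')$ is compact for Hausdorff convergence, and shows that the four-fold ratio $Z(c_{11})Z(c_{00})/(Z(c_{10})Z(c_{01}))$ is a positive \emph{continuous} function on $S$ (via Schwarz reflection, Carath\'eodory convergence of the uniformizing maps, and crosscut decompositions of the excursion kernel), so that boundedness is automatic. Your route is a direct quantitative one (monotonicity plus Harnack chains); it is viable and more explicit, at the price of having to produce uniform constants where the paper only needs continuity. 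Note also that your treatment of the $\psi$-factor rests on the claim that the marked points (including the two tips) stay at conformal distance bounded below in terms of $\eta$; that is precisely the delicate point the paper addresses through the comparison with the auxiliary points $x^\pm,y^\pm$, and it deserves an argument rather than an assertion.

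Two of your justifications are too weak as stated. First, for a bulk pair $(z_i,z_j)$ you infer boundedness of $H_{D\setminus A}(z_i,z_j)/H_D(z_i,z_j)$ from $\dist(A,\{z_i,z_j\})\geq\eta$ alone; this is false in general, since a hull at distance $\eta$ from both points can still nearly separate them and drive the ratio to $0$. What saves the step is the full hypothesis $\dist(K_s,\partial)\geq\eta$, $\dist(\hat K_t,\hat\partial)\geq\eta$: it provides a fixed corridor along the boundary arc joining $z_i$ to $z_j$ that neither hull can enter, and monotonicity against that corridor gives the lower bound. Second, at the tip you invoke ``a boundary Harnack inequality'' in $D_{u,0}=D\setminus K_u$; for an arbitrary hull this domain has no boundary regularity and no off-the-shelf boundary Harnack principle applies. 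The correct mechanism --- the one the paper uses --- is that $D_{u,0}$ and $D_{u,v}$ coincide in the $\eta$-neighbourhood of $K_u$, so one decomposes the excursion from $\gamma_u$ at a crosscut $\delta$ separating $K_u$ from $\hat K_v$ and the marked points; the contribution of the rough boundary near the tip is identical in both domains and cancels in the ratio, and what remains is an interior Harnack comparison at points at distance at least a constant times $\eta$ from everything. With these two repairs your argument goes through.
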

\begin{proof}
From the identity: $\ell_{s',t'}^{s,t}=\ell_{0,0}^{s',t'}\ell_{0,0}^{s,t}(\ell_{0,0}^{s',t}\ell_{0,0}^{s,t'})^{-1}$, it is enough to prove the bound for $s'=t'=0$. From e.g. Corollary 2.8 in \cite{Pommerenke}, it is enough to prove it in any reference Jordan domain, say the upper semidisk, with all marked points on the segment $(-1,1)$. Also without loss of generality, one can assume there is at least one marked point $z_1$.

In the bounded domain $D$, the total mass of loops of diameter at least $\eta$ in the loop measure $\mu^{loop}$ is finite; this gives uniform bounds above and below for the factor $\exp(-\lambda m(\dots))$.

Consider the set $S$ of quadruplets $(K,x',\hat K,y')$ where $K,\hat K$ are compact subsets of $\overline D$, $K,\hat K$ connected, with $x',y'$ on their respective boundaries, $\dist(K,K')\geq\eta$, $\dist(K,\partial)\geq\eta$, $\dist(\hat K,\hat\partial)\geq\eta$, and $x'$ (resp. $y'$) corresponds to a single prime end on $D\setminus (K\cup\hat K)$. (This last condition is always satisfied ``at the tip"). The set $S$ is compact (for Hausdorff convergence of compact subsets of $\overline D$). To such a quadruplet are associated four configurations: $c_{0,0}=c$, $c_{1,0}=(D\setminus K,x',y,z_1,\dots,z_n)$, $c_{0,1}=(D\setminus\hat K,x,y',\dots)$, $c_{1,1}=(D\setminus (K\cup\hat K,x',y',\dots)$. Then the ratio $\frac{Z(c_{11})Z(c_{00})}{Z(c_{10})Z(c_{01})}$ defines a positive function on the compact set $S$. It is enough to prove that this function is continuous. 

Let $(K_n,x'_n,\hat K_n,y'_n)_n$ converge to $(K,x',\hat K,y')$. By Schwarz reflection across $[-1,1]$ and the Carath\'eodory convergence theorem (\cite{Pommerenke}, Theorem 1.8), the conformal equivalence $\phi^n_{11}$ between $D^n_{11}=D\setminus(K^n\cup\hat K_n)$ and $D_{11}=D\setminus (K\cup\hat K)$, extended by reflection and normalized by $\phi^n_{11}(z_1)=z_1$, $\phi^n_{11}(z_1)>0$, converges locally uniformly away from the unit circle and $(K\cup \overline K\cup\hat K\cup{\overline{\hat K}})$ (here $\overline K$ is the conjugate of $K$). The same holds for $D_{10}^n=D\setminus K_n$ and $D_{01}^n=D\setminus\hat K_n$.

Fix small semidisks $D(z_i,\eta/2)$ around the marked points $z_i$'s; the choice of $D$ as the upper semidisk gives a choice of local coordinates at the $z_i$'s. Then the $H_{D^n_{..}}(z_i,z_j)$ are numbers; they can be decomposed into: excursion harmonic measure in the semidisks $D(z_i,\eta/2)$, $D(z_j,\eta/2)$, and the Green function at points on $C(z_i,\eta/2)$, $C(z_j,\eta/2)$. The excursion harmonic measures are fixed and the Green function converge due to the conformal invariance of the Green function and uniform convergence of the $\phi_{..}^n$ near the $z_i$'s. This proves continuity of the $H_{D^n_{..}}(z_i,z_j)$.

The treatment of ratios of type $H_{D^n_{1,1}}(x'_n,z_i)/H_{D^n_{1,1}}(x'_n,z_j)$ is similar: take a crosscut $\delta$ at positive distance of $K$, separating it from $\hat K$ and the marked points. Then the Poisson excursion kernel can be decomposed w.r.t. the first crossing (by a Brownian motion starting near $x'_n$) of $\delta$ and the last crossing of $C(z_i,\eta/2)$. It is easy to see that the excursion harmonic measure on $\delta$ converges. This gives continuity of ratios of type $H_{D^n_{1,1}}(x'_n,z_i)/H_{D^n_{1,1}}(x'_n,z_j)$. Assuming without loss of generality that there are at least 2 marked points $z_1,z_2$, the term $H(x,y)^.$ can be eliminated from the partition function.

The only remaining thing to check is the convergence of the cross ratios. This is immediate for those not involving $x,y$, from the convergence of the $\phi^n_{..}$ as above. This can be done also for those involving $x,y$; though for our purposes it is enough to prove that all cross-ratios (between marked points) are uniformly bounded. By comparison arguments, it is enough to prove it for cross-ratios involving $x^-,x^+$, $y^-,y^+$ (instead of $x,y,x',y'$) , where these new points
are on $[-1,1]$ and are such that the interval $(x^-,x^+)$ (resp. $(y^-,y^+)$) contains $K_n\cap [-1,1]$ (resp. $\hat K_n\cap [-1,1]$) for $n$ large enough, and no other marked point. This then reduces to the previous situation.

\end{proof}

\subsection{First exit of $\SLE_\kappa(\underline\rho)$}

We need to establish some simple qualitative properties of $\SLE_\kappa(\underline\rho)$ in, say, a reference configuration $c=(\H,0,\infty,z_1,\dots,z_n)$. In particular, we are interested in the position of the trace the first time a marked point is swallowed.


Assume that $n=2$, $0<z_1<z_2<\infty$. Then the $\SLE$ is well defined up to swallowing of $z_1$ at time $\tau_1=\tau_{z_1}$. There are several possibilities: $\tau_1=\infty$; $\gamma_{\tau_1}=z_1$; $\gamma_{\tau_1}\in (z_1,z_2)$; $\gamma_{\tau_1}=z_2$; $\gamma_{\tau_1}\in (z_2,\infty)$; or $\gamma_{\tau_1}$ does not exist. (This last case is unlikely to ever happen, though delicate to rule out in general).

More precisely, let $Y_t=\frac{g_t(z_1)-W_t}{g_t(z_2)-W_t}$ and $ds=\frac{dt}{(g_t(z_2)-W_t)^2}=-\frac 12 d\log g_t'(z_2)$. Then:
$$dY_s=(1-Y_s)\left[\sqrt\kappa dB_s+(\frac{\rho_1+2}{Y_s}+\rho_2+2-\kappa)dt\right]$$
where $B$ is a standard Brownian motion. This is a diffusion on $[0,1]$. Notice that $g'_t(z_2)$ is positive before $\tau_2$, and goes to zero at $t\nearrow \tau_2$. A scale function of this diffusion is $F$:
$$F(y)=\int_{1/2}^y u^{-\frac 2\kappa(2+\rho_1)}(1-u)^{\frac 2\kappa(4+\rho_1+\rho_2-\kappa)}du.$$
It blows up at 0 if $\rho_1\geq\frac\kappa 2-2$. This means that $Y$ does not reach 0 in finite time, so that $\tau_1=\tau_2$ a.s. (possibly infinite). If $\rho_1<\frac\kappa 2-2$, $\rho_1+\rho_2\leq\frac\kappa 2-4$, the scale function blows up at 1, not 0, meaning that $\tau_1<\tau_2$ a.s.

Assume that the trace has an accumulation point in $[z_1,z_2)$ as $t\nearrow \tau_2$. Then $(Y_t)$ accumulates at $0$ as $t\nearrow\tau_2$. This can be seen by interpreting $(g_t(z)-W_t)$ as the limit of the probability divided by $y$ that a Brownian motion started at $iy$, $y\gg 1$, exits $\H\setminus K_t$ on the boundary arc $[\gamma_t,z]$. Consider a time where the trace is near an accumulation point in $[z_1,z_2)$
In order to exit on $[\gamma_t,z_1]$, the Brownian motion has to get near $z_2$, and then move through a strait where the trace accumulates; this conditional probability is controlled by Beurling's estimate.

The following lemma gives conditions under which the exit point of an $\SLE_\kappa(\underline\rho)$ process (first disconnection of a marked point) can be located (at least on a segment). The statement is in terms of accumulation points, which will be enough for our purposes. It is likely that a stronger statement (in terms of limits) holds. 

\begin{Lem}\label{rhohit}
Consider an $\SLE_\kappa(\underline\rho)$ in $(\H,0,\infty,z_1,\dots,z_n)$, $0<z_1<\cdots<z_n$. Let $\overline\rho_k=\rho_1+\cdots+\rho_k$, $\overline\rho_n=\kappa-6$.
\begin{enumerate}
\item Assume that for some $k$, $\overline\rho_i\geq\frac\kappa 2-2$ for $i<k$ and $\overline\rho_i\leq\frac\kappa 2-4$ for $k\leq i<n$. 
Then a.s. as $t\nearrow\tau_1$, $\gamma_t$ accumulates at $z_k$ and at no other point in $[z_1,z_n]$.
\item Assume that for some $k$, $\overline\rho_i\geq\frac\kappa 2-2$ for $i<k$; $\overline\rho_k\in(\frac\kappa 2-4,\frac\kappa 2-2)$; and $\overline\rho_{i}\leq\frac\kappa 2-4$ for $k<i<n$. 
Then a.s. as $t\nearrow\tau_1$, $\gamma_t$ accumulates at a point in $[z_k,z_{k+1}]$ and at no point in $[z_1,z_n]\setminus [z_k,z_{k+1}]$.
\end{enumerate}
\end{Lem}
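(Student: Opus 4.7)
The plan is to combine an effective pairwise scale-function analysis with a Beurling-type harmonic-measure argument to localize the tip.

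First, for each $i\in\{1,\dots,n-1\}$ consider the ratio $Y^i_t=Z^i_t/Z^{i+1}_t\in(0,1)$, where $Z^j_t=g_t(z_j)-W_t$. A direct It\^o calculation, generalizing the one displayed for $n=2$, shows that under the time change $ds=dt/(Z^{i+1}_t)^2$ the process $Y^i$ satisfies an SDE on $(0,1)$ whose extra drifts (coming from marked points other than $z_i,z_{i+1}$) are bounded provided the other $z_j$ stay well-separated from the pair. Proceeding by induction on $i$ starting from $i=1$, and using Lemma \ref{rhodens} to locally compare with an effective two-point model in which $z_1,\dots,z_{i-1}$---already known to be swallowed simultaneously with $z_i$ by the inductive hypothesis---contribute only through their cumulative force $\overline\rho_i$, and using the normalization $\overline\rho_n=\kappa-6$ on the right, Feller's boundary classification yields: up to $\tau_1$, $Y^i$ stays bounded away from $0$ iff $\overline\rho_i\geq\kappa/2-2$, and bounded away from $1$ iff $\overline\rho_{i+1}\leq\kappa/2-4$.

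Second, I chain these pairwise bounds to identify the first-swallowed set. In Case 1, the hypothesis $\overline\rho_i\geq\kappa/2-2$ for $i<k$ inductively forces $z_1,\dots,z_k$ to vanish simultaneously at $\tau_1$, and $\overline\rho_i\leq\kappa/2-4$ for $k\leq i<n$ ensures $z_{k+1},\dots,z_n$ survive past $\tau_1$ (any unresolved simultaneous death among $z_{k+1},\dots,z_n$ must occur strictly later than $\tau_1$). Case 2 is handled identically, leaving the pair $(z_k,z_{k+1})$ unconstrained, so that the tip is only restricted to $[z_k,z_{k+1}]$.

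Third, to pin down the tip accumulation point in Case 1, I adapt the argument sketched in the paragraph preceding the lemma for $n=2$. Suppose toward contradiction that $\gamma_{t_n}\to p\in(z_k,z_{k+1}]$ along a subsequence $t_n\nearrow\tau_1$. Using the probabilistic interpretation of $g_t(z)-W_t$ as a normalized Brownian exit kernel and Beurling's projection theorem applied to the shrinking strait that the trace carves near $p$, one shows that certain harmonic-measure ratios degenerate in a way incompatible with the pairwise scale-function bounds of the first step (for example, $Y^{k-1}$ bounded away from $0$ when $k>1$; for $k=1$ one works with the single-marked-point reduction, in which the tip always lands on $z_1$). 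Hence $p=z_k$. In Case 2, the weaker hypothesis $\overline\rho_k\in(\kappa/2-4,\kappa/2-2)$ does not yield such a contradiction, so the tip can legitimately accumulate anywhere in $[z_k,z_{k+1}]$.

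The hard part will be the third step: at leading order in $\tau_1-t$, both scenarios ``$p=z_k$'' and ``$p\in(z_k,z_{k+1}]$'' produce $Z^k_t\to 0$ of order $\sqrt{\tau_1-t}$, so a direct Beurling bound must be complemented by a comparison of subleading terms, or else replaced by a direct computation of the hitting distribution on $[z_k,z_{k+1}]$ of an effective one-marked-point SLE$_\kappa(\overline\rho_k)$---the critical value $\rho=\kappa/2-4$ being precisely the threshold at which the hitting measure acquires a point mass at the left endpoint.
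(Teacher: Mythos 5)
Your heuristic is the right one --- it is exactly the ``Bessel dimension count'' the paper alludes to, and your thresholds $\overline\rho_i\geq\frac\kappa2-2$ (ratio inaccessible at $0$) and $\overline\rho_{i+1}\leq\frac\kappa2-4$ (inaccessible at $1$) match the scale-function computation the paper performs for $n=2$. But the proposal has genuine gaps beyond the one you flag yourself. First, the inductive reduction to an ``effective two-point model'' via Lemma \ref{rhodens} is precisely where the difficulty sits: the Radon--Nikodym density between the $n$-point process and the collapsed one degenerates exactly as the cluster $z_1,\dots,z_{i}$ is being swallowed, so your proviso that ``the other $z_j$ stay well-separated from the pair'' fails in the regime you need. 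The paper only uses this comparison on events where the trace avoids a $D(1,\sqrt{\eps_n})$-neighbourhood of the cluster, and must first prove (by a harmonic-measure/Carath\'eodory argument at a sequence of scales) that the cluster shrinks to a point as seen from the tip, plus a coupling estimate showing the density is $1+o(1)$ on that event. Without this, the claimed ``iff'' dichotomy for the non-autonomous diffusions $Y^i$ is asserted rather than proved. Second, knowing which $Z^i\to 0$ does not locate the accumulation set of $\gamma_t$: you must separately rule out accumulation at the already-swallowed points $z_1,\dots,z_{k-1}$ (the paper's case 1b), and you must rule out that, after a M\"obius change of coordinates, the hull becomes unbounded while keeping finite half-plane capacity --- equivalently, that the driving process stays bounded up to $\tau_1$. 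The paper does this by showing $\int^{\tau}ds/Z^1_s<\infty$ through an iterated comparison with Bessel processes; nothing in your ratio framework addresses this, and it is needed to exclude accumulation on $(z_{k+1},z_n]$.

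On your third step, which you correctly identify as the hard part: the paper avoids the delicate ``point mass at $z_k$ versus accumulation in $(z_k,z_{k+1}]$'' comparison altogether by sending $z_k$ to infinity. In those coordinates the hypotheses of Case 1 become symmetric about the seed, a stochastic domination of $Z^1$ by a Bessel process of dimension $\geq 2$ shows $\tau_1=\infty$, and ``accumulates at $z_k$'' becomes simply ``the trace is defined for all time and unbounded.'' This is the clean substitute for your subleading-order Beurling comparison or explicit hitting-density computation, neither of which you carry out. (Your closing remark that $\rho=\frac\kappa2-4$ is the threshold at which the hitting measure acquires an atom at the force point is the correct picture, but as written it is a restatement of the conclusion, not a proof.) In short: correct skeleton and correct exponents, but the three load-bearing estimates --- the controlled absolute-continuity comparison near a collapsing cluster, the boundedness of the driving process at the swallowing time, and the identification of the accumulation point itself --- are all missing.
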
 

\begin{proof}
While the results are fairly intuitive from a simple Bessel dimension count, complete arguments are a bit involved.

1. a) Case $k=n=2$. By a change of coordinates, one can send $z_2$ to $\infty$. Then the $\SLE$ is defined for all times, $\tau_1=\tau_2=\infty$. This implies that the trace is unbounded (accumulates at $z_2=\infty$). Moreover, for any $z_3\in(z_1,z_2)$, the (time changed) diffusion $Y_t=(g_t(z)-W_t)/(g_t(z_1)-W_t)$ goes to 1 as $t\nearrow\tau_1=\infty$, by a study of its scale function. In particular, it does not accumulate at 0; hence the trace does not accumulate in $[z_1,z_3)$. So the only point of accumulation of the trace in $[z_1,z_2]$ is $z_2$.\\

b) Case $k=n\geq 2$. Again, we change coordinates so that $z_n=\infty$. Let $\rho_1=\frac\kappa 2-2+\rho'_1$, $\rho_i=\rho'_i-\rho'_{i-1}$. By assumption, $\rho'_i\geq 0$, $i<n$. Consider the SDE (notations as in Lemma \ref{rhodens}):
$$dZ^1_t=\frac 2{Z^1_t}dt-\sqrt\kappa dB_t+\sum_{i=1}^{n-1}\frac{\rho_i}{Z^i_t}dt=
-\sqrt\kappa dB_t+\frac\kappa 2.\frac{dt}{Z^1_t}+\sum_{i=1}^{n-1}\rho'_i\frac{Z^{i+1}_t-Z^i_t}{Z^i_tZ^{i+1}_t}dt$$
the last sum being nonnegative. By a stochastic domination argument (comparison with a Bessel process, $\delta=2$), this shows that $\tau_1=\infty$. Hence the process is defined for all times and the trace is unbounded.

Next we prove that there is no point of accumulation of the trace in $[z_1,z_{n-1}]$. Take a small neighbourhood $U$ of $[z_1,z_{n-1}]$. Let $\sigma_n$ the first time the trace goes at distance $n$ (an a.s. finite stopping time). Let $U_n$ be the connected component of 1 in $(g_{\sigma_n}(U)-W_t)/(g_{\sigma_n}(z_{n-1})-W_t)$. By harmonic measure estimates, it is easy to see that $U_n$ is contained in an arbitrarily small neighbourhood of $1$ as $n\rightarrow\infty$, say $D(1,\eps_n)$. By a), with probability $1-o(1)$, an $\SLE_\kappa(\overline\rho_{n-1})$ in $(\H,0,\infty,1)$ does not intersect $D(1,\sqrt{\eps_n})$. On the other hand, the density of the $\SLE_\kappa(\underline\rho)$ starting with all marked points in $D(1,\eps_n)$ w.r.t. to $\SLE_\kappa(\overline\rho_{n-1})$ in $(\H,0,\infty,1)$ is $1+o(1)$ on the event that the trace does not intersect $D(1,\sqrt{\eps_n})$. This follows from an inspection of the densities (Lemma \ref{rhodens}) and the fact that on the event $\{\gamma\cap D(0,\sqrt{\eps_n})=\varnothing\}$, $(g'_t(z)/g'_t(1)-1)$ is small uniformly in $t$ and $z\in [1-\eps_n,1+\eps_n]$. Indeed, Brownian excursions starting from 1 and $z$ couple with high probability before exiting $D(1,\sqrt{\eps_n})$. 

This proves that with probability $1-o(1)$, the original $\SLE$ does not return to $U$ after $\sigma_n$. Notice that one can insert a point $z'_{n}$ between $z_{n-1}$ and $z_n$ with $\rho_n=0$ and the result still applies. This shows that there is no point of accumulation in $[z_1,z_n)$.

c) Case $n=3$, $k=2$. We prove that the trace does not accumulate at $z_3$ (similarly, at $z_1$). By sending $z_3$ at infinity, it is easy to see that the half-plane capacity of the hull at $\tau$ seen from $z_1$ is finite a.s. (one can even compute its Laplace transform). We have to rule out that the hull is unbounded while having finite half-plane capacity. It is enough to prove that the driving process stopped at $\tau$ stays bounded. Since $Z^1_t=g_t(z_1)-W_t$ goes to zero as $t\nearrow\tau$, it is enough to prove that $\int^\tau\frac{ds}{Z^1_s}$ is finite. Consider the SDE for $Z^1_t$:
$$dZ^1_t=-\sqrt\kappa dB_t+\frac{\rho_1+\rho_2+2+\eps_t}{Z_1^t}$$
where $\eps_t=\rho_2(1-Z^1_t/Z^2_t)$; $\eps_t$ goes to zero as $t\nearrow\tau$ (by studying the time changed diffusion $(Z^1_t/Z^2_t)$). 
One can proceed with a comparison with Bessel processes. On the event $\{\eps_t\in[0,\eps],t\geq t_0\}$, for $t\geq t_0$, $Z^1_t$ is between a Bessel$(\delta-\eps)$ and a Bessel$(\delta)$, $\delta=1+2\frac{\rho_1+\rho_2+2}\kappa\leq 2-\frac 8\kappa$ (both hit zero in finite time). Let $t_1$ be the first time the ratio of the two bounding Bessel processes $X^-$ and $X^+$ is 2; restart them at $t_1$ from the same position $Z^1_{t_1}$, and define inductively $t_i$, $i>1$. 
One can think of restarting the majorizing Bessel process at a lower level at $t_1$ as waiting for the Bessel to reach level $Z^1_{t_1}$. This proves that $\int_{t_0}^\tau\frac{ds}{Z^1_{s}}\leq 2\int_{t_0}^{\tau_0}\frac{ds}{X^+_{s}}$, which is finite.

d) General case. Send $z_k$ to infinity by a change of coordinate. The conditions on the $\rho_i$'s are rephrased as: 
\begin{align*}
\rho_1,\rho_1+\rho_2,\dots,\rho_1+\cdots+\rho_{k-1}&\geq\frac\kappa 2-2\\
\rho_n,\rho_n+\rho_{n-1},\dots,\rho_n+\cdots+\rho_{k+1}&\geq\frac\kappa 2-2
\end{align*}

Hence the situation to the left and to the right of 0 are identical. It is easy to see from b) that the trace is defined for all times and is unbounded. Let $\sigma_n$ be the time of first exit of $D(0,n)$ by $\gamma$. Rescale the process so that $g_t(z_1)$ (resp. $g_t(z_n)$, $W_t$) is sent to $-1$ (resp. $1$, $w_t$) at $t=\sigma_n$. If $w_t$ is away from $\pm 1$, one can reason as in b) from the result of c). If $w$ is close to 1, say, one can rescale by sending $w$ to $0$ (and 1) is fixed. The resulting process has density very close to 1 with a process of type b) as long as $w_t$ stays close to 1. When $w_t$ separates from 1, one can apply c) with a density argument.

2. a) Case $n=2,k=1$. It is easily seen by sending $z_2$ (or $z_1$) to infinity that the trace is defined for a finite time. 
Reasoning as in 1c) shows that the trace is bounded. Hence it accumulates somewhere in $(z_1,z_2)$, but not at $z_2$ (and by symmetry $z_1$).

b) General case. Send $z_k$ to infinity (so that $z_{k+1}<0$). A stochastic domination argument as in 1b) shows that the driving process is dominated by the one corresponding to $z_2,\dots,z_{k-1}$ being sent to infinity while $z_{k+2},\dots,z_n$ are sent to $z_{k+1}$. It is easily seen that for that process, $\tau_{k+1}<\tau_1$. Consequently, this is also the case for the original process, viz. the trace accumulates on $[z_k,z_n]$ without accumulating on $[z_1,z_k)$. 
As in 1d), the situation is symmetric, so there is also no accumulation on $(z_{k+1},z_n]$.
\end{proof}

\bibliographystyle{abbrv}
\bibliography{biblio}

-----------------------

\noindent The University of Chicago\\
Department of Mathematics\\
5734 S. University Avenue\\
Chicago IL 60637\\

\end{document}